\newtheorem{theo}{Theorem}
\newtheorem{Definition}{Definition}
\newtheorem{prop}{Proposition}
\newtheorem{cor}{Corollary}
\newtheorem{Remark}{Remark}
\newcommand{\Z}{{\mathbb {Z}}}
\begin{document}
\thispagestyle{empty}
\baselineskip=28pt
\vskip 4mm
\begin{center} {\LARGE{\bf On a Local Version of the  Bak-Sneppen Model }}
\end{center}

\baselineskip=12pt
\vskip 10mm

\begin{center}\large
Iddo Ben-Ari\footnote{Department of Mathematics, University of Connecticut, USA, \href{mailto:iddo.ben-ari@uconn.edu}{iddo.ben-ari@uconn.edu}} 
 and Roger W. C. Silva 
 \footnote{Departamento de Estat\'{\i}stica,
 Universidade Federal de Minas Gerais, Brazil, \href{mailto:rogerwcs@ufmg.br} {rogerwcs@ufmg.br}}
\\ 

\baselineskip=15pt
\vskip 10mm
\vskip 10mm

\end{center}

\begin{abstract} A major difficulty in studying the Bak-Sneppen model is in effectively comparing it with well-understood models. This stems from the use of two geometries: complete graph geometry to locate the global fitness minimizer, and graph geometry to replace the species in the neighborhood of the minimizer.  We present a variant in which  only the graph geometry is used. This allows to  obtain the stationary distribution through random walk dynamics. We use this to show that for constant-degree graphs, the  stationary fitness distribution converges to an IID law as the number of vertices tends to infinity. We also discuss exponential ergodicity through coupling, and avalanches for the model.  

~\\
\noindent{\it Keywords: Bak-Sneppen; species; fitness; stationary distribution} 

\noindent {\it AMS 1991 subject classification:60K35; 60J05; 92D15} 
\end{abstract}

\onehalfspacing
\section{Introduction}
\subsection{Background}
\label{sec:bg}
The Bak-Sneppen model was introduced in \cite{BS} as a toy model for biological evolution. Let $G=(V,E)$ be a finite connected undirected graph.  For $u,v \in V$, we write $u\sim v$ if $\{u,v\}\in E$ or if $u=v$.  The model is a discrete time Markov chain on the state space ${\cal S}$  of nonnegative  one-to-one functions on the vertex set $V$.  A state $\Gamma$ is to be interpreted as a collection of fitness values $(\Gamma(v),v\in V)$ assigned to the species at the vertices of $G$.  The rule of the transition is very simple. From state $\Gamma$, the system samples the next state $\Gamma'$ according to the following rule:
\begin{enumerate} 
\item[A-1.]\label{it:glob}  Let $v=\mbox{argmin }\Gamma$. 
\item[A-2.] $\Gamma'(w) = \Gamma(w)$ if $w\not\sim v$.  
\item[A-3.] $(\Gamma'(w):w\sim v)$ are IID $ \mbox{Exp}(1)$ independent of past. 
\end{enumerate} 
In other words, species with lowest fitness is removed from the system, along with its graph neighbors, and those are replaced by new species with IID $\mbox{Exp}(1)$  fitnesses instead.  Note that the dynamics of the process guarantee that (with probability $1$), a species can be identified with its fitness. The model is a Markov chain, and if $G$ is connected, it is also exponentially ergodic. In the original model $G$ was the $N$-cycle, $V=\{0,\dots,N-1\}$ and $E=\{\{n,n+1 \mod N\}:n\in V\}$. Also, in the original model, the fitnesses assigned were $U[0,1]$  rather than $\mbox{Exp}(1)$, which, as is easy to see, has no effect on the dynamics. Furthermore, a  deterministic transformation maps one model  in a  one-to-one  fashion into the other (e.g. map fitness $b$ in $\mbox{Exp}(1)$-model to $1-e^{-b}$ to obtain the $U[0,1]$-model). We choose to work with exponential distribution, even when referring to the original model. Numerical simulations for the original model (see \cite{BS} and \cite{J} for instance) show that the stationary distribution converges weakly  to a product law as $N\to\infty$, where the marginals are $\mbox{Exp}(1)$, conditioned to be above a certain threshold $b_c$ (or, equivalently, exponentials shifted by that threshold), with $e^{-b_c}$ estimated around $\frac 13$. This is an open problem. The physics literature contains other interesting and concise predictions based on numerical simulations which will not be discussed here. 

Mathematically rigorous results on the model were obtained by Ronald Meester and his collaborators in a sequence of papers introducing the graphical representation (hence the $\mbox{Exp}(1)$-model, interpreting  fitnesses as times of events of rate-$1$ Poisson processes) as the main tool to analyze the model and  specifically  the regenerative structure of  the model known as avalanches. In \cite{MZ} the authors establish non-triviality for the original model, in the sense that the marginal fitness distribution under the stationary distribution does not converge to a point mass at $+\infty$ as $N\to\infty$. This result highlights the role of the graph geometry through ``local interaction". If geometry is ignored by considering   $E=\emptyset$, then for each $N$, the fitness at each site converges to $+\infty$ a.s. as time tends to $+\infty$. Avalanches for the Bak-Sneppen model are defined as follows. Given a fitness threshold $b$, an avalanche from  this threshold is any part of the path from a time where the minimal fitness is at least $b$ until the next time this occurs. The important feature of avalanches is that their evolution depends on the past only through the vertex with the minimal fitness at the time the avalanche begins. This is because from that point onwards, the avalanche is determined by the new fitnesses sampled, as all previously sampled fitness are at least $b$, and won't play a role in the dynamics until the completion of the avalanche.  In particular, if the graph is vertex-transitive (as in the original model), then  for a fixed avalanche threshold, the duration of an avalanche and the number of sites at which species are replaced  during an avalanche, also known as the range of the avalanche,  are independent of its starting state. This implies that fixing an avalanche threshold and a path of the process, and considering the respective sequences of statistics, that is the sequence of durations of the avalanches along the path, and the corresponding sequence of  ranges, then each of the sequences is IID.  In \cite{MZ2} the authors introduced three  non-trivial  fitness  thresholds for avalanche statistics for the original model, corresponding to behavior as $N\to\infty$. Here is a brief summary. Writing $D_{N,b}$ and $R_{N,b}$ for random variables whose distribution coincides with the duration and range of an avalanche from threshold $b$  for the $N$-cycle, then it was shown that there exist  $0<b_c^D\le b_c^R\le b_c^P<\infty$ such that 
$$ b_c^D = \sup\{b: \lim_{N\to\infty} E[ D_{N,b}]<\infty\},~b_c^R = \sup\{b:\lim_{N\to\infty} E [ R_{N,b}] < \infty],$$
$$~b_c^P = \sup\{b : \lim_{N\to\infty} P [ R_{N,b} = N]=0\},$$ 
and that 
\begin{enumerate}
\item $b_c^D= b_c^R$; and
\item if $b_c^R = b_c^P$ then the stationary distribution for the $N$-cycle converges as $N\to\infty$ to a product measure of $\mbox{Exp}(1)$ conditioned to be above $b_c^P$. 
\end{enumerate}
An important property of the critical avalanche thresholds is that although being defined as limit quantities associated with the $N$-cycle as $N\to\infty$, they are also equal to the corresponding avalanche thresholds on the ``limit" graph $\Z$, when  avalanche on $\Z$ are defined as follows.  Fix a threshold $b\in [0,\infty)$. At time $t=0$, assign fitness $+\infty$ to all vertices in $\Z$, except the origin where the fitness assigned is $b$, then continue the process according to the rules  A-1. and A-2. with $G=\Z$, until the first time all fitnesses are at least $b$. This (possibly infinite) time,  $D_{\infty,b}$,  is the avalanche duration,  and the range, $R_{\infty,b}$,  is the number of vertices whose species were replaced during the avalanche. Then, it was shown that 
$$ \lim_{N\to\infty} E [ D_{N,b} ] = E [D_{\infty,b}],~\lim_{N\to\infty} E [ R_{N,b} ] = E [R_{\infty,b}],$$ and that 
$$\lim_{N\to\infty} P(R_{N,b} = N)= P( R_{\infty,b} = \infty).$$ 
That is, the thresholds $b_c^D,b_c^R,b_c^P$ are intrinsic to the avalanche on $\Z$ defined above. 

A key element in the proof of the second statement is the notion of locking thresholds. It can be shown that if at time $t=0$ the fitnesses are IID $\mbox{Exp}(1)$, then at each time $t\in \Z_+$, the model has an associated (random) function $L_t:V \to [0,\infty)$, such that conditioned on $L_t$: 
\begin{enumerate} 
\item the fitness at a vertex $v$ is $\mbox{Exp}(1)$ conditioned to be above $L_t(v)$; and
\item the fitnesses are independent.
\end{enumerate}
The idea is then to show that equality of $b_c^R$ and $b_c^P$ implies that asymptotically in time and in $N,$ the distribution of the locking thresholds converges to a product of point masses at $b_c^P$.

One of the key difficulties in studying the Bak-Sneppen model, particularly the original model on the $N$-cycle, is the difficulty in comparing and  coupling it with classical well-understood models. In \cite{GMN} the authors were able to obtain two-sided bounds on the analogs for  $b_c^P$ for vertex-transitive infinite graphs of degree $d<\infty$, where the avalanches are defined analogously to the construction on $\Z$.  This was done by observing  that the range of an avalanche is stochastically dominated by a branching process, which yields a lower bound on $p_c^P$,  and  by constructing a coupling with a site percolation model in which the cluster containing the origin is a subset of  the vertices where species were replaced during the avalanche, giving an upper bound on $b_c^P$. The bounds are asymptotically sharp for $\Z^d$ and $d$ regular trees as $d\to\infty$.

We note that the stationary distribution for the original Bak-Sneppen model was computed for $N=4$ and  $N=5$ in \cite{S1} and \cite{S2}, respectively, as well as in \cite{ALS}. In the latter work, the  authors obtained a formula for the stationary distribution for a variant of the Bak-Sneppen in which at each step all species are replaced except for one (coinciding with original Bak-Sneppen if $N=4$) or two (coinciding with original Bak-Sneppen if  $N=5$). 

Over the years, several additional variations of the Bak-Sneppen model were studied in the mathematics literature.  A discrete-fitness variant on the $N$-cycle was introduced in \cite{BK}. In this process, a configuration is a binary word with $N$-bits, and at each step one chooses a random bit of minimum value and replaces it and its two neighbors by independent Bernoulli random variables with parameter $p\in(0,1)$. The authors obtain bounds on the average number of ones in the stationary distribution as $N\to\infty$. In \cite{MS} the authors consider a model where, at each discrete time step, the particle with the lowest fitness is chosen, together with one other particle, chosen uniformly at random among all remaining particles. It is shown that this model obeys power law behavior for avalanche duration and size, behavior observed in numerical simulations for the original Bak-Sneppen model.  Another variant of the Bak-Sneppen model was introduced in  \cite{GMS}, studied in \cite{BMR}, and generalized in  \cite{MV}.  Here the number of species follows a path of reflected random walk transient to infinity. When the population increases, new species with IID $\mbox{U}[0,1]$-fitness are added, and when the population decreases, the species with lowest fitnesses are removed from the population. For this model, the empirical fitness distribution tends to a uniform law on $[p_c,1]$, similar to observed empirical  behavior of Bak-Sneppen  on the cycle as $N\to\infty$. Note that the geometric aspect of the Bak-Sneppen model is lost in this model, since the population has no spatial structure. A scaling limit for the empirical distribution for this model was obtained in \cite{B}.  
\subsection{Local Bak-Sneppen}
In this paper  we present yet another ``relative" of the Bak-Sneppen model.  Indirectly, this will shed some light on the Bak-Sneppen model. More directly, this is part of an effort to  find a family of solvable models which can be coupled with or effectively compared with the Bak-Sneppen model. 

To further motivate our work, we present a class of models which contains both the Bak-Sneppen model as well as our new model, and which stresses the role of geometry in the dynamics. Recall our finite  undirected  graph $G=(V,E)$. Construct a second undirected  graph $G'=(V,E')$. As before, for $u,v\in V$, write $u\sim' v$ if $\{u,v\}\in E'$ or if $u=v$. We consider a new Markov chain on state space $V\times {\cal S}$, where ${\cal S}$ is the space of nonnegative one-to-one functions on the vertex set $V$, by slightly tweaking the dynamics of the Bak-Sneppen model as follows. From state $(u,\Gamma)$ the next state $(v,\Gamma')$ is sampled according to the rules presented above, but with A-1 replaced by 
\begin{enumerate} 
\item[A-1'.] $v= \mbox{argmin }\{  \Gamma(w):w\sim' u \}$, 
\end{enumerate} 
and keeping A-2, A-3.  In other words, instead of looking for the global minimum, we look for the local minimum in the $G'$-neighborhood of $u$. Or: we look for minima in the $G'$-neighborhood, and update fitnesses in the $G$-neighborhood.  The  Bak-Sneppen model corresponds to  $G'$ being the complete graph, in which case there is no need to track the local-minimum: it coincides with the global minimum which is a function of the state $\Gamma$. 

We will study the case where there is only one geometry, namely  $G'=G$, and call it the local Bak-Sneppen model.  In the sequel, we will always assume that $G$ is connected. Our main result provides a complete description of the stationary  measure for the local Bak-Sneppen model. 

We now highlight some of the similarities and differences between the Bak-Sneppen and the local Bak-Sneppen model, as seen through our results. First and foremost, a general or asymptotic formula for the stationary distribution for the  Bak-Sneppen model is an open problem. One of the main results in this paper (Theorem \ref{th:main}) is a formula for the local Bak-Sneppen model. Our formula does provide asymptotic IID structure for the fitnesses for $d$-regular graphs as the number of vertices tends to infinity (Proposition \ref{limit_dist}), a feature which is expected to hold for the original  Bak-Sneppen model.  The limiting fitness distribution in our model is exponential, conditioned not to be the minimum among  $(d+1)$-IID $\mbox{Exp(1)}$ random variables, or $\mbox{Exp}(1)$ conditioned to be above a random threshold. This differs from the expected expression for  Bak-Sneppen,  which is $\mbox{Exp}(1)$ conditioned to be above a deterministic threshold. This difference has a simple heuristic explanation due to the difference in the geometry:  in the original Bak-Sneppen fitnesses at all vertices are considered at every step when looking for the global  (complete graph) minimum, effectively eliminating all small values, while in our case, once assigned, fitness is  considered  at most once (if at a site in the neighborhood of the local minimum) before being replaced, a mechanism that cannot exclude very small fitnesses. 

\section{Results} 
We will assume that $G=(V,E)$ is a finite, undirected and connected graph. 
\subsection{Description of the Model}\label{sec:erg}
Let ${\bar \Omega}=  V \times [0,\infty)^V$, equipped with the   Borel $\sigma$-algebra induced by the product of the discrete topology on $V$ and the Euclidean metric on each of the $|V|$ copies of  $[0,\infty)$.  We write $\Omega \ni \omega = (v_\omega,\Gamma_\omega)$, where $v_\omega \in V$ and $\Gamma_\omega: V \to [0,\infty)$, and let $\Omega\subset {\bar \Omega}$ denote the set consisting of all elements $\Omega\ni \omega = (v_\omega,\Gamma_\omega)$ with $\Gamma_\omega$ one-to-one, and equip it with the  Borel $\sigma$-algebra, induced by the subspace topology. The set $\Omega$ will serve as the state space for our process, and the restriction on $\Gamma_\omega$ is to be understood as distinct fitnesses at distinct vertices, a requirement needed for the dynamics to be well-defined. Also, for $v\in V$, we let $A_v = \{u \in V : u \sim v\}\cup\{v\}$, that is $A_v$ consists of all neighbors of $v$ in $G$ and $v$ itself. 
 
 The process is denoted by $\Xi=(\Xi_t:t\in\mathbb{Z}_+)$, with $\Xi_t = (X_t,\Gamma_t) \in \Omega$.  Conditioned on   $(\Xi_s:s\le t)$,  $\Xi_{t+1}=(X_{t+1},\Gamma_{t+1})$ is defined as follows:  \begin{equation*}
 \left\{ 
 \begin{array}{l} 
  X_{t+1}=\mbox{argmin}_{u \in A_{X_t}} \Gamma_t (u),\\
   (\Gamma_{t+1}(u):u \in A_{X_{t+1}})\mbox{ are IID Exp}(1),\mbox{ independent of }\Xi_0,\dots,\Xi_t,\mbox{ and }\\
  \Gamma_{t+1}(v) = \Gamma_t (v),~v \not\in A_{X_{t+1}}.
 \end{array}
 \right. 
 \end{equation*}
Clearly, $\Xi$ is a discrete time Markov process. Observe  that it follows directly  from the definition that  the vertex process $X=(X_t:t\in\mathbb{Z}_+)$ is a random walk on $G$, with transition function $p(v,v') = \frac{\mathbb{1}_{A_v}(v')}{|A_v|}$. In particular, $X$ is ergodic with stationary measure $\mu$, given by  
$$\mu(v) = \frac{\deg(v)+1}{\sum_{v\in V} ( \deg (v)+1)}=\frac{|A_v|}{\sum_{u\in V}|A_u|}=\frac{|A_v|}{S_G},$$
where $S_G =\sum_{u \in V}|A_u|$.
Unless clearly specified, whenever a random walk is mentioned in this paper, the reader should have in mind a process with the above transition probabilities.
\subsection{Ergodicity}
\label{sec:ergodicity}
 If $\gamma$ is a distribution on $\Omega$, write $P_\gamma$ for the distribution of $\Xi$ when $\Xi_0$ is $\gamma$-distributed. If $\eta \in \Omega$, we abuse notation and  write $P_\eta$ for $P_{\delta_\eta}$,  where $\delta_\eta$ is the Dirac-delta measure at $\eta$. 

Let $Q_1,Q_2$ be two probability measures on $\Omega$. We define the total variation distance between $Q_1$ and $Q_2$ as 

$$\|Q_1-Q_2\|_{TV}=\sup_A \left |Q_1(A) -Q_2(A)\right|.$$ 
We also write $P_\gamma (\Xi_t \in \cdot)$ for the distribution of $\Xi_t$ under $P_\gamma$. 
We write 
$$\bar{d}_t=\sup_{\eta,\eta'\in \Omega}  \|P_{\eta} (\Xi_t \in \cdot) - P_{\eta'} (\Xi_t \in \cdot) \|_{TV}.$$ 
We have the following exponential ergodicity statement. 
\begin{prop}\label{experg}
	There exist constants $c>0$ and $\beta\in (0,1)$, depending only on $G$, such that for any $t\in {\mathbb Z}_+$, 
	\begin{equation} 
	\label{eq:totval} \bar{d}_t \le c \beta^t.
	\end{equation}
\end{prop}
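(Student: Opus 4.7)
The plan is to establish a Doeblin-type minorization for the $T$-step transition kernel of $\Xi$, with $T$ depending only on $G$, and then deduce \eqref{eq:totval} from the standard contraction property. Concretely, the goal is to produce a probability measure $\nu$ on $\Omega$ and a constant $\epsilon = \epsilon(G) > 0$ such that
\begin{equation*}
P_\eta(\Xi_T \in \cdot)\ge \epsilon\,\nu(\cdot)\qquad\text{for every }\eta \in \Omega.
\end{equation*}
Granted this, the textbook Doeblin contraction gives $\bar d_T \le 1-\epsilon$, and iterating together with the monotonicity of $t\mapsto \bar d_t$ (applying any Markov kernel can only decrease total variation) yields $\bar d_t \le (1-\epsilon)^{-1}\beta^t$ with $\beta := (1-\epsilon)^{1/T}\in(0,1)$, which is of the required form.

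To produce the minorization, fix any $v^\star\in V$. By connectedness and finiteness of $G$, one can choose $T$ depending only on $G$ so that from every vertex $v_1 \in V$ there is an admissible walk path $\pi^{v_1} = (v_1,\pi^{v_1}_2,\ldots,\pi^{v_1}_T)$ in $G$ that visits every vertex and ends at $v^\star$; a DFS traversal of a spanning tree, padded with self-loops (allowed since $v\in A_v$) and extended by a shortest path to $v^\star$, suffices. For a given initial state $\eta = (v_\eta,\Gamma_\eta)$, the first vertex $X_1 = \mbox{argmin}_{u\in A_{v_\eta}}\Gamma_\eta(u)$ is forced by $\eta$; write $v_1(\eta)$ for it. For $t\ge 1$, the fitnesses on $A_{X_t}$ were freshly sampled as IID $\mbox{Exp}(1)$'s at step $t$, hence $X_{t+1}$ given $X_t$ is uniform on $A_{X_t}$. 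Therefore the event $E := \{X_s = \pi^{v_1(\eta)}_s \text{ for } s=1,\ldots,T\}$ satisfies $P_\eta(E) \ge (d^\star+1)^{-(T-1)} =: p_0$ uniformly in $\eta$, where $d^\star := \max_{v\in V}\deg(v)$.

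The main work is to show that the conditional law of $\Gamma_T$ on $E$ admits a density on $[0,\infty)^V$ bounded below by some $c_0 = c_0(G) > 0$ on the compact box $[1,2]^V$, uniformly in $\eta$. Set $s(v) := \max\{t\le T : v\in A_{\pi^{v_1(\eta)}_t}\}$ (well defined because $\pi^{v_1(\eta)}$ visits every vertex), and let $U_t(u)$ denote the fresh $\mbox{Exp}(1)$ sample used at step $t$. Then $\Gamma_T(v) = U_{s(v)}(v)$, and the conditioning on $E$ pins $\mbox{argmin}_{u\in A_{\pi^{v_1(\eta)}_t}}U_t(u) = \pi^{v_1(\eta)}_{t+1}$ for $t=1,\ldots,T-1$, while $U_T$ is unconstrained. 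Grouping vertices by $s(v)$ and using independence of the $U_t$'s across different $t$, the conditional law factors across groups. The time-$T$ group is IID $\mbox{Exp}(1)$; for $s<T$, the variables $(U_s(v): s(v)=s)$ admit the representation $M_s + E_v$ with $M_s \sim \mbox{Exp}(|A_{\pi^{v_1(\eta)}_s}|)$ and $E_v\sim \mbox{Exp}(1)$ mutually independent (this is the well-known description of the exponential order statistics conditioned on the location of the minimum). Each block has a continuous, strictly positive joint density on the corresponding $(0,\infty)$-vectors, so the overall conditional density of $\Gamma_T$ on $E$ is continuous and strictly positive on $(0,\infty)^V$. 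Since $v_1(\eta)$ ranges over the finite set $V$ and each $\pi^{v_1}$ can be fixed once and for all, only finitely many such conditional densities arise, and the infimum $c_0$ of their minima on the compact $[1,2]^V$ is strictly positive. Taking $\nu$ equal to the product of the point mass at $v^\star$ with normalized Lebesgue measure on $[1,2]^V$, and $\epsilon = p_0\, c_0$, then yields the desired minorization.

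The main obstacle is the density estimate in the third paragraph: the successive argmin conditionings baked into $E$ couple the fresh exponential samples nontrivially, and one must verify that the induced conditional law of $\Gamma_T$ is genuinely absolutely continuous with a strictly positive density on a common compact box that does not depend on $\eta$. The factorization across distinct last-refresh times $s(v)$, combined with the explicit $\mbox{Exp}+\mbox{Exp}$ form of each block, is what makes this tractable; everything else in the argument is either combinatorial bookkeeping on $G$ or a standard application of Doeblin's theorem.
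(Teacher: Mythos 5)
Your proof is correct, but it takes a genuinely different route from the paper. The paper builds a two-stage coupling of two copies of the process: the embedded random walks are run independently until they meet (the meeting time has a geometric tail since the lazy walk on a finite connected graph is irreducible and aperiodic), and from the meeting time on both copies use the same fresh fitness samples, so the full states coincide once the walk has covered the graph; Aldous' coupling inequality then bounds $\bar d_t$ by the tail of meeting time plus cover time. You instead prove a Doeblin minorization for the $T$-step kernel by forcing the walk along a fixed covering path, and the key steps hold up: $X_{t+1}$ is indeed uniform on $A_{X_t}$ for $t\ge 1$; the event $E$ imposes one argmin constraint per block $U_1,\dots,U_{T-1}$ with $U_T$ free, so the conditional law factors; the minimizer $\pi_{t+1}$ always satisfies $s(\pi_{t+1})\ge t+1$, so every surviving coordinate of block $t$ is a non-minimizing one and the $M_s+E_v$ representation applies (its joint density, $n\,(1-e^{-\min_i y_i})\prod_i e^{-y_i}$ before marginalization, is continuous and positive on the open orthant); and since the conditional law of $\Gamma_T$ given $E$ depends on $\eta$ only through $v_1(\eta)$, only finitely many densities occur and the uniform lower bound $c_0$ on $[1,2]^V$ exists. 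As for what each approach buys: your $\epsilon=p_0c_0$ is exponentially small in $|V|$, so the resulting $\beta$ is far worse than the coupling rate, and the paper's construction is what gets refined later (Remark \ref{rem1}, the bounds \eqref{eq:nice_upper}--\eqref{eq:nice_lower}, and the reflection coupling on the cycle) into two-sided estimates in terms of meeting and cover times, which the Doeblin route does not produce. On the other hand, your minorization delivers existence and uniqueness of the stationary measure for free by standard Doeblin theory (the paper proves Corollary \ref{cor:unique_stat_dist} separately in the Appendix), and your block law $M_s+E_v$ is exactly the $\mbox{Exp}^+(|A_{\pi_s}|)$ distribution that resurfaces in Theorem \ref{th:main} as the stationary fitness law away from the walker. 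One minor point to record when writing it up: $\nu$ must live on $\Omega$, i.e.\ on injective fitness configurations, but Lebesgue measure on $[1,2]^V$ gives these full mass, so nothing is lost.
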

Before we prove Proposition \ref{experg}, we state a standard corollary whose proof we leave to the Appendix. 
\begin{cor}
\label{cor:unique_stat_dist}
$\Xi$ has a unique stationary measure $\pi$. 
\end{cor}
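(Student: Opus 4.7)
The plan is to derive existence and uniqueness of $\pi$ directly from the uniform exponential contraction in Proposition \ref{experg}, via a standard Doeblin-type fixed point argument on the complete metric space of Borel probability measures on $\Omega$ equipped with total variation.

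For existence, I would fix any $\eta\in\Omega$ and argue that the orbit $(P_\eta(\Xi_t\in\cdot))_{t\ge 0}$ is Cauchy in total variation: conditioning on $\Xi_s$ and using the Markov property, the distance $\|P_\eta(\Xi_{t+s}\in\cdot)-P_\eta(\Xi_t\in\cdot)\|_{TV}$ is bounded by the average of $\|P_{\eta'}(\Xi_t\in\cdot)-P_\eta(\Xi_t\in\cdot)\|_{TV}$ against the law of $\Xi_s$ under $P_\eta$, which is at most $\bar{d}_t\le c\beta^t$ by \eqref{eq:totval}. Completeness of the total variation metric on probability measures (valid since $\Omega$ is a Borel subset of a Polish space) then produces a limit $\pi$, and a second application of \eqref{eq:totval} shows that this limit does not depend on the choice of $\eta$.

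I would then verify invariance by noting that the one-step transition kernel $P$ is non-expansive on probability measures in total variation, so passing to the limit as $t\to\infty$ in the Chapman--Kolmogorov identity $P_\eta(\Xi_{t+1}\in\cdot)=P_\eta(\Xi_t\in\cdot)\,P$ yields $\pi=\pi P$. For uniqueness, any stationary $\pi'$ satisfies $\pi'=\int P_\eta(\Xi_t\in\cdot)\,\pi'(d\eta)$ for every $t$, and applying \eqref{eq:totval} under the integral shows that the right-hand side converges in total variation to $\pi$, forcing $\pi'=\pi$.

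No step in this plan is really hard; this is the canonical ``geometric contraction implies unique invariant measure'' argument, and there is no genuine obstacle. The only measure-theoretic point worth pausing on is the completeness of the total variation metric on probability measures, which holds because $\Omega$ is a Borel subset of a Polish space and hence a standard Borel space.
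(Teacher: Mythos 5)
Your proposal is correct and follows essentially the same route as the paper: both extract $\pi$ as the limit of $P_\eta(\Xi_t\in\cdot)$ via the Cauchy estimate coming from Proposition \ref{experg}, then obtain invariance by passing to the limit in the one-step transition and uniqueness by applying the same bound to an arbitrary stationary law. The only difference is that you invoke completeness of the total variation norm on probability measures (which in fact holds over any measurable space, so the Polish/standard Borel qualification is unnecessary) where the paper verifies countable additivity of the setwise limit by hand.
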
\label{cor:d_bounds}
We note that much of our discussion in the following sections will focus on a detailed  description of the stationary distribution $\pi$, and, in particular, obtaining an explicit formula for it that allows to study the model along a sequence of graphs.

Let
$$d_t = \sup_{\eta } \| P_\eta (\Xi_t \in \cdot) - \pi\|_{TV}.$$ 
Then a standard convexity argument and the triangle inequality imply 
 $$ 
 d_t \le \bar{d}_t \le 2 d_t.
 $$
Here is another  useful yet standard corollary, obtained from the exponential ergodicity and  whose  proof is left to the Appendix. 
\begin{cor}\label{cor:erg}
  		Let    $f:\Omega \to {\mathbb R}$ be bounded and measurable.  Then 
  		$$ \lim_{n\to\infty} \frac{1}{n}\sum_{t=0}^{n-1} f(\Xi_t) = \int f d\pi.$$
  		in $P_\eta$-probability for every $\eta$. 
  \end{cor}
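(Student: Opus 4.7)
The plan is to use the exponential ergodicity from Proposition \ref{experg} to bound both the mean and the variance of the Cesàro average $S_n := \frac{1}{n}\sum_{t=0}^{n-1} f(\Xi_t)$, and then conclude via Chebyshev's inequality. Fix $M := \|f\|_\infty < \infty$.

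First I would handle the mean. Since $\bar d_t \le c\beta^t$ controls the TV distance between $P_\eta(\Xi_t \in \cdot)$ and $\pi$ (up to the factor of $2$ recorded just before the corollary), bounded $f$ satisfies
$$\bigl|E_\eta[f(\Xi_t)] - \textstyle\int f\, d\pi\bigr| \le 2M\, d_t \le 2Mc\beta^t.$$
Summing a geometric series and dividing by $n$ shows $\frac{1}{n}\sum_{t=0}^{n-1} E_\eta[f(\Xi_t)] \to \int f\, d\pi$.

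Next I would control the variance by showing that the covariances decay exponentially in the time gap. For $0\le s\le t$, set $g_{t-s}(x) := E_x[f(\Xi_{t-s})]$. By the Markov property,
$$\mathrm{Cov}_\eta\bigl(f(\Xi_s), f(\Xi_t)\bigr) = E_\eta\bigl[f(\Xi_s)g_{t-s}(\Xi_s)\bigr] - E_\eta[f(\Xi_s)]\, E_\eta[g_{t-s}(\Xi_s)] = \mathrm{Cov}_\eta\bigl(f(\Xi_s), g_{t-s}(\Xi_s)\bigr).$$
The bound on $d_{t-s}$ gives $\|g_{t-s} - \int f\, d\pi\|_\infty \le 2Mc\beta^{t-s}$, so replacing $g_{t-s}$ by its (constant) approximation and using $|\mathrm{Cov}(A,B)|\le 2\|A\|_\infty\|B\|_\infty$ yields $|\mathrm{Cov}_\eta(f(\Xi_s),f(\Xi_t))| \le K\beta^{|t-s|}$ for a constant $K$ depending only on $M,c$ (and with the trivial bound $M^2$ absorbed). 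Summing over the double index,
$$\mathrm{Var}_\eta(S_n) = \frac{1}{n^2}\sum_{s,t=0}^{n-1}\mathrm{Cov}_\eta\bigl(f(\Xi_s),f(\Xi_t)\bigr) \le \frac{K}{n^2}\sum_{s,t=0}^{n-1}\beta^{|t-s|} \le \frac{K}{n}\cdot\frac{1+\beta}{1-\beta}.$$

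Finally, Chebyshev's inequality combined with the mean estimate gives, for any $\epsilon>0$,
$$P_\eta\bigl(|S_n - \textstyle\int f\, d\pi| > \epsilon\bigr) \le \frac{\mathrm{Var}_\eta(S_n)}{(\epsilon/2)^2} + \mathbf{1}\bigl\{|E_\eta[S_n]-\int f\, d\pi| > \epsilon/2\bigr\},$$
and both terms vanish as $n\to\infty$, proving convergence in $P_\eta$-probability. I do not anticipate a substantive obstacle here; the only step demanding any care is the covariance estimate, which reduces cleanly to the Markov property plus the TV bound.
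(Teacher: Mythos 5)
Your proof is correct and follows essentially the same route as the paper: a second-moment/Chebyshev argument in which the Markov property converts the exponential total-variation bound of Proposition \ref{experg} into exponential decay of the correlations $\mathrm{Cov}_\eta(f(\Xi_s),f(\Xi_t))$, giving $\mathrm{Var}_\eta(S_n)=O(1/n)$. The only cosmetic difference is that the paper normalizes $\int f\,d\pi=0$ and bounds $E_\eta[S_n^2]$ directly, whereas you treat the bias and the variance separately.
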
  

In order to prove Proposition \ref{experg} we remember several  notions that will be used in the sequel. Recall that if $Y=(Y_{t}:t\in\Z_+)$ and $Y'=(Y_t:t\in\Z_+)$ are both Markov processes defined on a common probability space having the same transition kernel, then the pair $(Y,Y')$ is called a {\bf coupling} for $Y$. Given a coupling, we write 
$$\tau_{coup}(Y,Y')=\inf\{t\in\Z_+:Y_t=Y'_t\}$$ 
for the {\it coupling time}. A coupling is called {\it successful} if $\tau_{coup}(Y,Y')<\infty$ a.s. All couplings we will construct and consider in the sequel will be successful, and, in addition, will be also coalescing, that is 
$$ Y_t=Y'_t,~\mbox{ for all }t \ge \tau_{coup}(Y,Y').$$ 
Finally, suppose that $X$ is a random walk on a finite connected graph $G$. We define the {\it cover time} for $X$ as 
$$\tau_{cover}(X)=\inf\{t\in\Z_+:\{X_0,\dots,X_t\}=V\}.$$   

\begin{proof}[Proof of Proposition \ref{experg}]
	The idea is to construct a coupling and employ Aldous' inequality. The coupling has two stages. In the first stage we are running the two copies independently until the coupling between the respective random walks on $G$ is successful (the fact that the graph is finite and connected and that the probability to stay in each vertex is positive guarantee that the two random walks will eventually meet). This completes the first stage. In the second stage, the two random walks move together by assigning the same fitnesses in the local neighborhoods for both copies. This  guarantees a successful coupling for the local Bak-Sneppen on or before the cover time of the random walk, starting from the vertex  where the first stage ended.  	
	
	Fix two states $\eta$ and $\eta'$. We will construct a coupling $(\Xi,\Xi')$, where each of the processes is a local Bak-Sneppen on $G$, and  $\Xi_0=\eta$ and $\Xi'_0=\eta'$. To this end, for each $t\ge 0$ and $v\in V$, let $K(t,v)(\cdot)$ be a random vector of $\mbox{Exp}(1)$ random variables, indexed by $u\in A_v$. We will assume that all random variables  $K(\cdot,\cdot)(\cdot)$ are independent. Given $\Xi_t=(X_t,\Gamma_t)$ and $\Xi'_t=(X_t',\Gamma_t')$, we continue as follows.
 Let 
	\begin{itemize}
		\item  $X_{t+1}=\mbox{argmin}_{u \in A_{X_t} }\Gamma_t (u)$ and  
		$$\Gamma_{t+1}(u)=\begin{cases} K(t+1,X_{t+1})(u) & u \in A_{X_{t+1}} \\ \Gamma_t(u) & \mbox{otherwise.}\end{cases}$$ 
		\item 
		$X'_{t+1}=\mbox{argmin}_{u \in A_{X'_t} }\Gamma'_t (u)$ and   
		$$\Gamma'_{t+1}(u)=\begin{cases} K(t+1,X'_{t+1})(u) & u \in A_{X'_{t+1}} \\ \Gamma'_t(u) & \mbox{otherwise.}\end{cases}$$ 
	\end{itemize} 
	Then the resulting  process $(\Xi,\Xi')$ is a coupling. Let $P_{\eta,\eta'}$ denote the distribution of $(\Xi,\Xi')$ as constructed above.  Observe that $(X,X')$ is a coupling of the irreducible and aperiodic random walk on $G$, and by construction  up to time $\tau_{coup}(X,X')$, $X$ and $X'$ are independent. This implies that $\tau_{coup}(X,X')<\infty$ a.s. and that $\tau_{coup}(X,X')$  has a geometric tail. From the definition of $\Xi,\Xi'$ it also follows that $\Gamma_{\tau_{coup}(X,X')}(u)=\Gamma'_{\tau_{coup}(X,X')}(u)$ for all $u \in A_{X_{\tau_{coup}(X,X')}}$, and, consequently, that  $\Gamma_{\tau_{coup}(X,X')+t}(u)=\Gamma'_{\tau_{coup}(X,X')+t}(u)$ for all  $u \in \cup_{s\le t } A_{X_{\tau_{coup}(X,X')+s}}$. Let 
	$$\bar \sigma = \inf\{t\in \Z_+: \cup_{s \le t} A_{X_{\tau_{coup}(X,X')+s}}=V\}.$$ 
	It is clear that $\bar{\sigma}$ is stochastically dominated by the cover time of the random walk on $G$ starting from $X_{\tau_{coup}(X,X')}$. As a result of the finiteness of the graph, $\bar\sigma<\infty $ a.s. and it has a geometric tail. From the construction, $\Xi_t = \Xi'_t$ for all $t\ge \tau_{coup}(X,X')+\bar\sigma$.  By Aldous' inequality we have $$  \|P_{\eta} (\Xi_t \in \cdot) - P_{\eta'} (\Xi'_t \in \cdot) \|_{TV} \le P_{\eta,\eta'}(\tau_{coup}(X,X')+\bar{\sigma}>t) \le c \beta^t,~t\ge 0,$$ for some constant $c>0$ and $\beta\in(0,1)$, whose dependence on $\eta$ and $\eta'$ is only through $(X_0,X_0')$. Since the graph is finite, we may choose $c$ and $\beta$ so that the inequality holds for all $\eta,\eta'$.  
\end{proof} 

\begin{Remark}\label{rem1} The choice of an independent coupling is a generic recipe that works for any graph.  It is easy to see from the proof that the independent coupling in the first stage can be replaced by any coupling satisfying all of the following three conditions: 
\begin{enumerate} 
\item Successful coupling for the random walks $X,X'$ for any choice of $(X_0,X'_0)=(u,v),~u,v \in V$. 
\item For every $t\in \Z_+$,  $\Gamma_{\tau_{coup}(X,X')+t}(u) = \Gamma'_{\tau_{coup}(X,X')+t}(u)$ for all $u \in \cup_{s\le t} A_{X_{\tau_{coup}(X,X') +s}}$. 
\item For every $t\in \Z_+$, the conditional distribution of $(X_{\tau_{coup}(X,X')+t+1},X_{\tau_{coup}(X,X')+t+1})$  on  $$((X_s,X'_s):s \le \tau_{coup}(X,X')+t)$$ is a function of $(X_{\tau_{coup}(X,X')+t},X'_{\tau_{coup}(X,X')+t})$. 
\end{enumerate} 
\end{Remark}
We now derive two-sided bounds on ${\bar d}_t$ for a general graph that are somewhat more explicit than the upper bound of Proposition \ref{experg}. The idea is to get an upper bound expressible as the tail of a sum of two independent random variables. The method works for any coupling satisfying the conditions of Remark \ref{rem1}.  The last condition guarantees that conditioned on $X_{\tau_{coup}(X,X')}$ (or $X'_{\tau_{coup}(X,X')}$,  which is the same), $\tau_{coup}(X,X')$ and $\bar \sigma$ are independent. Nevertheless, if the graph is not vertex transitive, 
there is no guarantee that $X_{\tau_{coup}(X,X')}$ and  ${\bar \sigma}$  are independent. We wish to eliminate this in order to obtain an easier-to-handle upper bound, as the tail of the sum of two independent random variables. The idea is not to modify the coupling, but instead to replace ${\bar \sigma}$ with a random variable which stochastically dominates ${\bar \sigma}$ conditioned on $X_{\tau_{coup}(X,X')}$, whatever the latter may be. To accomplish that, consider a random walk $X$ on $G$ starting from $v$, i.e., $X_0=v$. Let  
\begin{equation*}\label{tilde:sigma}{\tilde\sigma}_v = \inf\{t\ge 0: \cup_{s\le t}  A_{X_s}=V\},
\end{equation*}
and denote by $F_v$ the distribution of ${\tilde\sigma}_v$ and  $F^{\max} =\min_{v\in V} F_v$. Let $\sigma^{max}$ be a random variable with distribution function $F^{\max}$, independent of $\tau_{coup}(X,X')$.  Then $\sigma^{max}$ stochastically dominates ${\tilde\sigma}_v$ for every choice of $v$, and we obtain the following upper bound:
\begin{equation}
\label{eq:nice_upper} {\bar d}_t \le \sup_{u,v} P(\tau_{coup}(X,X') + \sigma^{\max}  >t|(X_0,X'_0)=(u,v)).
\end{equation}

We turn to a lower bound. Let $x\in V$ and suppose now that $\eta=\Xi_0= (x,\Gamma_0),\eta'=\Xi'_0= (x,\Gamma'_0)\in \Omega$,  with  $\Gamma_0(v)= \Gamma'_0(v)$ for all $v\in A_{x}$ and  $\Gamma_0(v) \ne \Gamma'_0(v)$ for all remaining vertices $v\in V-A_{x}$. 
We will make a more specific choice of $\eta,\eta'$. Let  $\epsilon >0$ and $\delta\in (0,1)$.  We will choose our initial state to satisfy $\Gamma_0(v) >1$  for all $v$, and  $ \Gamma'_0(v) < \delta$ for all $v \in V-A_{x}$. Let $C_{\delta,t}$ denote the event that all fitnesses sampled in the coupling  up to time $t$ are $\ge \delta$. By choosing $\delta$ sufficiently small, we can guarantee that $P( C_{\delta,t})>1-\epsilon$. With these initial states,  our coupling gives  $\tau_{coup}(X,X')=0$, hence  $\tau_{coup}(\Xi,\Xi')=\bar{\sigma}$. Therefore it follows that on the event $C_{\delta,t}\cap \{\tau_{coup}(\Xi,\Xi')>t\}$ all fitness in $\Gamma_t$ are $\ge \delta$, and there exists at least one $v\in V$ such that $\Gamma'_t(v)<\delta$. Now let $B_\delta$ be the event that all fitnesses  are $\ge \delta$. Clearly, 
\begin{align*} \bar{d}_t  &\ge  ( P_\eta (\Xi_t \in B_\delta) - P_\eta (\Xi'_t \in B_\delta) )\\
& = E_{\eta,\eta'} [{\bf 1}_{B_\delta}(\Xi_t) - {\bf 1}_{B_\delta} (\Xi'_t),\tau_{coup}(\Xi,\Xi')>t] \\
& \ge E_{\eta,\eta'}  [{\bf 1}_{B_\delta}(\Xi_t) - {\bf 1}_{B_\delta} (\Xi'_t),C_{\delta,t}\cap \tau_{coup}(\Xi,\Xi')>t]-P( C_{\delta,t}^c) \\
& = P_{\eta,\eta'}(C_{\delta,t}\cap \tau_{coup}(\Xi,\Xi')>t) -P( C_{\delta,t}^c) \\
& \ge P( \tau_{coup}(\Xi,\Xi')>t) -2P( C_{\delta,t}^c)\\
& \ge P_{\eta,\eta'}(\bar{\sigma}>t) -2\epsilon=(*)
\end{align*} 
Let $F^{min}=\max_{v\in V} F_v$ and $\sigma^{\min}$  a random variable with distribution function $F^{\min}$. Then, since the distribution of $\bar{\sigma}$ depends only on $x$, $\bar \sigma$ dominates $\sigma^{\min}$ for every choice of $x$, and therefore 
$$(*) \ge P(\sigma^{\min}>t)-2\epsilon.$$ 
Since $\epsilon$ is arbitrary, we conclude that
\begin{equation}
\label{eq:nice_lower}  {\bar d}_t \ge P(\sigma^{\min} > t).
\end{equation}

Clearly, the upper and the lower bounds in Expressions \eqref{eq:nice_upper} and \eqref{eq:nice_lower} are not tight. When $G$ is vertex transitive, then $F^{min}=F^{max}$, and if, in addition, we know that the exponential tail of $\tau_{coup}(X,X')$ is lighter than that of  $\sigma^{min}$, then it is easy to see that the bounds decay at the same exponential tail (a quick way to see this is through the moment generating function of $\sigma^{min}$ and of $\tau_{coup}(X,X')+\sigma^{max}$ which blow up at the same value).  

\subsubsection{Coupling for the $N$-cycle} 
We will improve the lower bound for the case where $G$ is the $N$-cycle. We will assume $N\in 4 {\mathbb N}$. Let  $\eta = (0,\Gamma_0)$ and $\eta'=(N/2,\Gamma_0')$ with $\Gamma_0(v) >\delta^{-1}$ for all $v$, and $\Gamma_0'(v) < \delta$ for all $v$, for some $\delta>0$. For $v \in V$, let ${\bar v}$ denote its ``reflection" about the ``equator" $\{N/4,3N/4\}$: 
$\bar v = (N/2 - v) \mod N$. We will further assume that the minima of  $\Gamma'_0({\bar v})$ and $\Gamma_0(v)$ over $v \in A_{0}$ are attained at the same vertex. Unlike the generic coupling for $X$ and $X'$ we have used for a general graph, here we will consider a reflection coupling, which satisfies the conditions on Remark \ref{rem1}. Specifically, assuming that $(\Xi_s,\Xi'_s)$ is defined for all $s\le t$, we define $(\Xi_{t+1},\Xi'_{t+1})$ inductively. Our induction hypothesis is that if $X_t'\ne X_t$, then $X_t'={\bar X_t}$ and $\Gamma'_t({\bar u}) = \Gamma_t(u)$ for $u \in A_{X_t}$.  Let $X_{t+1}$ be the vertex minimizing $\Gamma_t (u)$ among the three vertices in $A_{X_t}$.  Sample three IID $\mbox{Exp}(1)$ random variables  $U_{-1},U_0$ and $U_1$, independent of the  the past, and let  $\Gamma_{t+1} (X_{t+1} +e \mod N) = U_e,~e \in \{-1,0,1\}$. 
\begin{enumerate}
\item If $X'_t\ne X_t$, then by induction hypothesis,  $X'_t={\bar X_t}$,  and  the minima of $\Gamma'_t({\bar u})$ and $\Gamma_t (u)$ over $u \in A_{X_t}$ are attained at the same vertex. We then let  $X'_{t+1}$ be its reflection about the equator, that is $X'_{t+1} ={\bar X_{t+1}}$. As for $\Gamma'_{t+1}$: 
\begin{enumerate} 
\item If $X'_{t+1}\ne X_{t+1}$, let  $\Gamma'_{t+1} ({\bar v}) = \Gamma_{t+1} (v)$ for $v\in A_{X_{t+1}}$; otherwise 
\item Let $\Gamma'_{t+1}(v) = \Gamma_{t+1} (v)$ for all $v\in A_{X_{t+1}}$. 
\end{enumerate} 
\item If $X_t'=X_t$, let $X'_{t+1}=X_{t+1}$ and let $\Gamma'_{t+1}(u) = \Gamma_{t+1}(u)$ for all $u \in A_{X_{t+1}}$. 
\end{enumerate} 

With this coupling, we have that $\tau_{coup}(X,X')$ is the time $X$ exits the upper semi-circle $\{3N/4+1,\dots, N-1,0,\dots,N/4-1\}$. 
Let $B_X$ denote the vertices $u$ such that $\Gamma_{\tau_{coup}(X,X')}(u)=\Gamma_0(u)$. Then by construction, $\Gamma_{\tau_{coup}(X,X')}(u)\ne \Gamma'_{\tau_{coup}(X,X')}(u)$ for all $u\in B_X$, a.s. and, $B_X$ a.s. contains all elements in the lower semicircle, $\{N/2+1,\dots, 3N/4-1\}$, except for the  neighbor of $X_{\tau_{coup}(X,X')}$ in the  lower semicircle (this neighbor is either $N/2+1$ or $3N/4-1$). Define $B_{X'}$ analogously. Then again,  $\Gamma_{\tau_{coup}(X,X')}(u)\ne \Gamma'_{\tau_{coup}(X,X')}(u)$ for all $u\in B_{X'}$, a.s. and, $B_{X'}$ a.s.  contains all elements of the upper semicircle $\{3N/4+1,\dots,N-1,0,\dots,N/2-1\}$ except the neighbor of $X'_{\tau_{coup}(X,X')}$ in the upper semicircle.  Write $u_P$ for $X_{\tau_{coup}(X,X')}$ and $\tilde u_{P}$ for the second element in the equator, $\tilde u_{P}=N-u_P$. Thus $B= B_X\cup B_{X'}$ contains $V-(A_{u_P}\cup \{\tilde u_P\})$, and clearly $\Xi$ and $\Xi'$ will be coupled only after all fitnesses in $B$ are be updated.  In other words, letting 
$${\bar \sigma_B}= \inf\{t\ge0 : B \subset \cup_{s\le t} A_{X_{\tau_{coup}(X,X')+s}}\}=\inf\{t\ge 0: V-\{\tilde u_P\}\subset \cup_{s\le t} A_{X_{\tau_{coup}(X,X')+s}}\},$$ 
then we showed that 
$$ \tau_{coup}(\Xi,\Xi') \ge \tau_{coup}(X,X')+ \bar {\sigma}_B.$$ 

Suppose that $t< \tau_{coup}(X,X')+{\bar \sigma}_B$. Then there exists an element $v$ in the upper semicircle such that ${\bar v} \in B_X$ or ${v} \in B_{X'}$ with the property that, respectively,  $\Gamma_t({\bar v})= \Gamma_0 ({\bar v})$ or $\Gamma'_t(v)=\Gamma'_0(v)$. Let $f_+$ be the indicator of the event that at least one of the fitnesses in the lower semicircle is larger then $1/\delta$, and $f_-$ be the indicator that at least one of the fitnesses in the upper semicircle is less  than $\delta$, and let $f=f_+-f_-$.  Let $C_{\delta,t}$ be the event that up to time $t$ all fitnesses sampled are in $[\delta,\frac 1\delta]$. Observe that on the event $\{t<\tau_{coup}(X,X')+{\bar \sigma}_B\}\cap C_{\delta,t}$, $f(\Xi_t)-f(\Xi'_t)\ge 1$. Indeed, since all sampled fitnesses are in $[\delta,\frac{1}{\delta}]$, and all initial fitnesses for $\Xi$ are $>\frac1 \delta$, $f_-(\Xi_t)=0$, and similarly $f_+(\Xi'_t)=0$. Therefore, on this event, $f(\Xi_t)-f(\Xi'_t)= f_+(\Xi_t)+f_-(\Xi'_t)\ge 1$, because for some $v$ in the lower semicircle the fitness $\Gamma_t(v)=\Gamma_0(v)>\frac{1}{\delta}$, hence $f_+(\Xi_t)=1$, or for some vertex $v$ in the upper semicircle $\Gamma'_t(v) =\Gamma'_0(v)<\delta$, hence $f_-(\Xi'_t)=1$. 
Also, on the event $\{t\ge \tau_{coup}+{\bar \sigma}_B\}\cap C_{\delta,t}$, $f(\Xi_t)=f(\Xi'_t)=0$. Therefore, 
\begin{align*} 
2{\bar d}_t &\ge E_{\eta,\eta'} [f (\Xi_t)-f (\Xi'_t)]\\
& \ge E_{\eta,\eta'}[f(\Xi_t)-f(\Xi'_t),C_{\delta,t}]-2 P(C_{\delta,t}^c)\\
& = E_{\eta,\eta'}[f(\Xi_t)-f(\Xi'_t),C_{\delta,t}\cap \{t<\tau_{coup(X,X')}+{\bar \sigma }_B\}]- 2 P(C_{\delta,t}^c)\\
& \ge P(\tau_{coup(X,X')}+{\bar \sigma}_B>t)-3P(C_{\delta,t}^c).
	\end{align*}
The distribution of $\tau_{coup}(X,X')+{\bar \sigma}_B$ is independent of the choice of $\delta$, as it only depends on the path of the random walk $X$. Furthermore, since $G$ is vertex transitive, $\tau_{coup}(X,X')$ and ${\bar \sigma}_B$ are independent. Since $P(C_{\delta,t})\to 1$ as $\delta\to 0$, we conclude with 
$$ {\bar d}_t \ge \frac 12 P(\tau_{coup}(X,X')+{\bar \sigma}_B>t).$$	

As for the upper bounds, a simple modification of the arguments in \cite{BPT} (and specifically: the exit time from an interval is dominated by the exit time from its ``middle point", as shown in the Appendix there), shows that 
\begin{eqnarray*}
\sup_{v,u}P({\tau_{coup(X,X')}}+\bar{\sigma}>t|(X_0,X'_0)=(v,u))&=&\sup_{v,\bar{v}}P({\tau_{coup(X,X')}}+\bar{\sigma}>t|(X_0,X'_0)=(v,\bar{v}))\\
&=&P({\tau_{coup(X,X')}}+\bar{\sigma}>t|(X_0,X'_0)=(0,N/2)).
\end{eqnarray*}
	Thus, we have obtained the following bound: 
	\begin{prop}
		Consider the Local Bak-Sneppen on the $N$-cycle, with $N\in 4 {\mathbb N}$.  Let $X$ be a random walk on the $N$-cycle, starting from $0$, and let   $\tau_1,\tau_2$  and $\tau_3$ be  independent random variables with the following distributions:  
		 \begin{itemize}
		 	\item $ \tau_1\overset{\mbox{dist}}{=}\inf\{t\ge 0: |N/2 - X_t|\le N/4\}$
		 	 (the exit time from the discrete interval symmetric about $0$ containing exactly $\frac{N}{2}-1$ elements). 
		 	\item
		 	$\tau_2\overset{\mbox{dist}}{=}\inf\{t\ge 0:\cup_{s\le t}A_{X_s}=\{0,\dots,N-1\}-\{N/2\}\}.$ 
		 	\item $\tau_3\overset{\mbox{dist}}{=}\inf\{t\ge 0:\cup_{s\le t}A_{X_s}=\{0,\dots,N-1\}\}.$ 
		 \end{itemize}
		 Then 
		 	$$\frac 12P(\tau_1 + \tau_2 >t) \le {\bar d}_t \le P(\tau_1+\tau_3>t) .$$
	\end{prop}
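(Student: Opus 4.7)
The plan is to assemble the proposition from the two bounds already derived in the preceding discussion, together with one additional symmetrization argument that reduces a supremum over starting pairs to the antipodal pair $(0,N/2)$. I will also need to check that, in the vertex-transitive setting of the cycle, the random times $\tau_{\text{coup}}(X,X')$, $\bar\sigma_B$, and $\bar\sigma$ coming from the reflection coupling match the intrinsic distributions of $\tau_1,\tau_2,\tau_3$.

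First, for the lower bound, I would rely on the reflection-coupling analysis already carried out: starting from $(\eta,\eta')$ with $X_0=0,\ X'_0=N/2$ and with $\Gamma_0$ and $\Gamma'_0$ chosen as in the discussion, we obtained
$\bar d_t \ge \tfrac12 P(\tau_{\text{coup}}(X,X')+\bar\sigma_B>t)$
after letting $\delta\to 0$. To finish the lower bound it remains to identify this with $\tfrac12 P(\tau_1+\tau_2>t)$. Here $\tau_{\text{coup}}(X,X')$ is the first exit time of $X$ from the upper semicircle $\{3N/4+1,\dots,N-1,0,\dots,N/4-1\}$, which has exactly $N/2-1$ vertices and is symmetric about the starting point $0$; so it has the distribution of $\tau_1$ by definition. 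Next, conditionally on $X_{\tau_{\text{coup}}}=u_P$ the excess $\bar\sigma_B$ is the cover time, by the random walk started at $u_P$, of $V-\{\tilde u_P\}=V-\{N-u_P\}$. By vertex transitivity of the cycle this has the same law as $\tau_2$ regardless of $u_P$, and the strong Markov property gives independence of $\tau_{\text{coup}}(X,X')$ and $\bar\sigma_B$. This yields the lower bound.

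For the upper bound, I would apply inequality \eqref{eq:nice_upper} with the reflection coupling rather than the independent coupling; one verifies that the reflection coupling satisfies the three conditions of Remark \ref{rem1}, which is what \eqref{eq:nice_upper} actually requires. Again by vertex transitivity, $F^{\max}=F^{\min}=F$ is the distribution of $\tau_3$, independent of $\tau_{\text{coup}}(X,X')$. So the right-hand side becomes
$\sup_{u,v} P(\tau_{\text{coup}}(X,X')+\tau_3>t\mid (X_0,X'_0)=(u,v))$,
and it remains to show this supremum is attained at $(0,N/2)$. This is the step I expect to be the main obstacle, but it is handled by the argument from \cite{BPT}: under the reflection coupling the meeting time of $X,X'$ starting at $(u,v)$ is an exit time from a discrete interval, and such an exit time is stochastically dominated by the exit time from the interval one obtains by translating the endpoints so that $u$ becomes its midpoint. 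On the cycle, this monotonicity in both endpoints combined with rotational symmetry concentrates the worst case at antipodal pairs, and then rotational symmetry identifies the maximum with the choice $(0,N/2)$. Substituting and replacing $\tau_{\text{coup}}(X,X')$ at this extremal pair by $\tau_1$ gives the upper bound $P(\tau_1+\tau_3>t)$, completing the proof.
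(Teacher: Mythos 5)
Your proposal is correct and follows essentially the same route as the paper: the proposition is obtained by assembling the reflection-coupling lower bound $\tfrac12 P(\tau_{\text{coup}}(X,X')+\bar\sigma_B>t)$ and the upper bound \eqref{eq:nice_upper}, identifying $\tau_{\text{coup}}(X,X')$, $\bar\sigma_B$, and $\sigma^{\max}$ with $\tau_1$, $\tau_2$, $\tau_3$ via vertex transitivity, and reducing the supremum to the antipodal pair $(0,N/2)$ by the exit-time domination argument of \cite{BPT}. The only difference is that you make explicit the distributional identifications and the independence via the strong Markov property, which the paper leaves implicit.
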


\subsection{The Stationary Distribution}
We start with some additional notation. If $Y_1,\dots,Y_n$ are IID $\mbox{Exp}(1)$, then the minimum has distribution $\mbox{Exp}(n)$. Also, write $\mbox{Exp}^+(n)$ for the distribution of $Y_1$ conditioned that $Y_1$ is not the minimum of $(Y_1,\dots,Y_n)$. A straightforward calculation shows that if  $Y \sim \mbox{Exp}^+(n)$, then $Y$ has density 
$$\rho_n (t) = \frac{n}{n-1} e^{-t}(1-e^{-(n-1)t}),\,\,\,\,\,t>0.$$ 
Our main result is a description of the stationary distribution for the Local Bak-Sneppen model. 
\begin{theo} 
\label{th:main} Let $X_0$ be a $\mu$-distributed random variable, and for each $u \in V$, let $Z^u=(Z^u_t:t\in\Z_+)$ be a random walk on $V$, starting at $u$, independent of $X_0$. Next for each $u,v \in V$, let 
$$ \tau_{u,v}=\inf \{t\in \Z_+:Z^u\in A_v\},$$ 
and partition $V$ into the sets 
$$ V_i = \{v \in V: \tau_{X_0,v}=i\}.$$ 
Conditioned on $X_0$ and $Z$, define a random vector $\Gamma_0$ indexed by elements of $V$ according to the following rules: 
\begin{enumerate} 
\item Given the $V_i's$, the random vectors $((\Gamma_0(v):v \in V_i):V_i \ne \emptyset)$ are independent. 
\item For each nonempty $V_i$, the random variables  $(\Gamma_0(v):v \in V_i)$ are IID with 
\begin{enumerate} 
\item $\mbox{Exp}(1)$-distribution if $i=0$; and 
\item $\mbox{Exp}^+(|A_{Z^{X_0}_i}|)$ otherwise. 
\end{enumerate} 
\end{enumerate} 
Then $\Xi_0=(X_0,\Gamma_0)$ is distributed according to the stationary distribution for $\Xi$. 
\end{theo}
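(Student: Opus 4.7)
The plan is to show that the distribution $\nu$ prescribed in the theorem is invariant under one step of the local Bak-Sneppen dynamics; by Corollary \ref{cor:unique_stat_dist}, invariance identifies $\nu$ with the unique stationary distribution. Concretely, I would sample $\Xi_0=(X_0,\Gamma_0)\sim\nu$ using the auxiliary walk $Z^{X_0}$ and its partition $\{V_i\}$, apply one step of the chain to obtain $\Xi_1=(X_1,\Gamma_1)$, and then exhibit $\Xi_1$ in the same form but built from a random walk starting at $X_1$.

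The candidate walk at time~$1$ is the ``shift'' $\tilde Z^{X_1}_0 = X_1$, $\tilde Z^{X_1}_j = Z^{X_0}_{j-1}$ for $j\ge 1$. Since $\Gamma_0|_{V_0}$ is IID Exp(1) on $V_0=A_{X_0}$, the argmin $X_1$ is uniform on $A_{X_0}$, and the detailed-balance identity $\mu(u)\,p(u,v)=1/S_G$ for $v\in A_u$ gives $X_1\sim\mu$ and $X_0\mid X_1\sim\mathrm{Unif}(A_{X_1})$. Since the tail $Z^{X_0}_1,Z^{X_0}_2,\ldots$ is independent of $\Gamma_0|_{V_0}$ (and hence of $X_1$) given $X_0$, the shifted process $\tilde Z^{X_1}$ is, conditionally on $X_1$, a random walk from $X_1$ independent of $X_1$, matching the theorem's setup at time~$1$. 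A short computation of the hitting times of the $A_v$'s by $\tilde Z^{X_1}$ then yields $V'_0=A_{X_1}$ and $V'_j=V_{j-1}\setminus A_{X_1}$ for $j\ge 1$.

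The final step is to verify that $\Gamma_1$ conditioned on $(X_1,\tilde Z^{X_1})$ has the prescribed IID structure on each $V'_j$. The case $V'_0=A_{X_1}$ is immediate (fresh IID Exp(1) by the dynamics), and for $j\ge 2$ one has $V'_j\subseteq V_{j-1}$, so the IID $\mathrm{Exp}^+(|A_{Z^{X_0}_{j-1}}|)=\mathrm{Exp}^+(|A_{\tilde Z^{X_1}_j}|)$ law provided by $\nu$ at time~$0$ restricts directly to $V'_j$, and independence across the $V'_j$'s follows from the analogous independence under $\nu$ together with the freshness of the block on $A_{X_1}$.

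The delicate case, which I expect to be the main obstacle, is $V'_1=V_0\setminus A_{X_1}$. Here $\Gamma_1(v)=\Gamma_0(v)$ for $v\in V'_1$ comes from the IID Exp(1) block on $V_0$ conditioned on $X_1$ being its argmin, and one must show that the restriction of this conditional distribution to $V_0\setminus A_{X_1}$ is IID $\mathrm{Exp}^+(|A_{X_0}|)=\mathrm{Exp}^+(|A_{\tilde Z^{X_1}_1}|)$. The naive order-statistic decomposition writes the non-argmin coordinates as $\Gamma_0(X_1)+E_v$ with $E_v$ IID Exp(1), which yields the correct marginal but not immediate independence; the crucial point to exploit is that the argmin coordinate $\Gamma_0(X_1)$ is overwritten by the fresh sample on $A_{X_1}$ and does not appear in $\Gamma_1$ itself. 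The proof at this stage requires a careful bookkeeping of the order statistics that, together with the specific form of $\mathrm{Exp}^+(n)$ as the law of a non-minimum Exp(1) sample, yields the claimed IID law on $V'_1$; this is where I expect the most delicate calculation to lie.
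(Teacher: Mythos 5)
Your overall route is the same as the paper's: you invert the auxiliary walk by prepending $X_1$ (the paper calls this $\bar Z$ and justifies the law of the prepended walk by reversibility of $\mu$), you obtain the same block decomposition $V'_0=A_{X_1}$, $V'_j=V_{j-1}\setminus A_{X_1}$ for $j\ge1$, and you correctly reduce the entire verification to the conditional law of the retained block $V'_1=A_{X_0}\setminus A_{X_1}$. Everything up to that reduction is sound and matches the paper's computation, which arrives at exactly the quantity $E[\prod_{w\in A_u\setminus A_v}f_w(U_w)\,;\,U_v=\min_{w\in A_u}U_w]$ with $(U_w)$ IID $\mathrm{Exp}(1)$.

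The step you defer is, however, a genuine gap, and it cannot be closed in the way you anticipate. Conditionally on $\{U_v=\min_{w\in A_u}U_w\}$, memorylessness gives precisely the representation you mention: $U_w=M+E_w$ for $w\in A_u\setminus\{v\}$, with $M=U_v\sim\mathrm{Exp}(|A_u|)$ and the $E_w$ IID $\mathrm{Exp}(1)$ independent of $M$. Each marginal is the convolution $\mathrm{Exp}(|A_u|)*\mathrm{Exp}(1)=\mathrm{Exp}^+(|A_u|)$, which is why the one-coordinate computation works, but the coordinates share the random shift $M$ and are therefore positively correlated: for $w\neq w'$ and $s,t>0$, conditional independence given $M$ yields $\mathrm{Cov}\bigl(\mathbf{1}_{\{U_w>s\}},\mathbf{1}_{\{U_{w'}>t\}}\bigr)=\mathrm{Cov}\bigl(e^{-(s-M)^+},e^{-(t-M)^+}\bigr)>0$, since both are nondecreasing, nonconstant functions of $M$. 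Overwriting the coordinate $U_v$ merely marginalizes out $M$; it does not restore independence. Hence the retained block is IID $\mathrm{Exp}^+(|A_{X_0}|)$ only when $|A_{X_0}\setminus A_{X_1}|\le 1$, which holds for the $N$-cycle and the complete graph but fails, for instance, for a star with three or more leaves or for triangle-free $d$-regular graphs with $d\ge3$. Be aware that the paper's own proof asserts the factorization $E[\prod_{w\in A_u\setminus A_v}f_w(U_w)\mid U_v=\min_{w\in A_u}U_w]=\prod_{w}E[f_w(\mathrm{Exp}^+(|A_u|))]$ in a single line; that identity is exactly the point at issue and is valid only in the singleton case. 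So your instinct that this is where the difficulty lies is correct, but the ``careful bookkeeping'' you postpone would reveal that the claimed IID structure fails rather than confirm it.
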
 
\begin{proof}
In order to prove the theorem, it is convenient to taking time backwards. More precisely, we let $X_{-j} = Z^{X_0}_j$. Let ${\bar Z}_0 =X_1$ and let ${\bar Z}_t =Z^{X_0}_{t-1}$ for all $t \ge 1$. By reversibility, $\bar Z$ is a random walk on $G$ starting from initial distribution $\mu$ at time $0$. Let $\bar{\tau}_v = \inf \{ t \in \Z_+:{\bar Z}_t \in A_v\}$, and for $i\in \Z_+$, let ${\bar V}_i =\{v\in V: \bar{\tau}_v =i\}$. We observe that ${\bar V}_0= A_{X_1}$, and that by construction, 
$\bar{V}_i = V_{i-1} - {\bar V}_0$. Furthermore, the distribution of $(X_1,{\bar Z})$ and of $(X_0,Z^{X_0})$ is the same. Let $(f_v:v \in V)$ be bounded real-valued  continuous functions on ${\mathbb R}$. We need to show that 
$$ E [ \prod_{v\in V} f_v (\Gamma_1(v))]= E [ \prod_{v \in V} f_v (\Gamma_0(v))].$$ 
By construction, this is the same as proving 
$$ E [ \prod_{v\in V} f_v (\Gamma_1(v))]=E\left [  \prod_{w \in { V}_0} E [f_w (\mbox{Exp}(1))]  \times \prod_{i \ge 1}\prod_{w \in { V}_i} E [ f_w (\mbox{Exp}^+(|A_{{Z}_i}|))]|Z^{X_0}\right].$$ 
We do this by decomposing the left-hand side according to the values taken by $f_w(\Gamma_0(w))$ on the sets $\{\bar V_i, i\geq 0\}$. We have
\begin{equation}
\label{eq:start_from_this} 
  E [ \prod_{v\in V} f_v (\Gamma_1(v))]  =E [ \prod_i \prod_{v \in {\bar V}_i} f_v (\Gamma_1(v)]. 
  \end{equation}
Since, according to the process rules, $f_w(\Gamma_1(w))$ coincides with $f_w(\Gamma_0(w))$ on $V_{i-1} - A_v$, $i\geq 1$, we obtain
 \begin{equation*}
 E [ \prod_i \prod_{v \in {\bar V}_i} f_v (\Gamma_1(v)] = \sum_{u,v}E [ \prod_{w \in A_{v}} f_w (\Gamma_1(w))\times \prod_{i\ge 1}  \prod_{w \in V_{i-1} - A_v} f_w (\Gamma_0(w)); X_1 = v, X_0=u ] .
  \end{equation*}
  Now, on the event $\{X_1=v\}$, $\Gamma_1(w)$ is Exp(1) for all $w\in A_v$. Hence, the equation above is equal to
  \begin{align*}
  &\sum_{u,v} \left(\prod_{w \in A_v} E [f_w (\mbox{Exp}(1))]\times E [ E[ \prod_{i\ge 1}  \prod_{w \in V_{i-1} - A_v} f_w (\Gamma_0(w)); X_1 = v, X_0=u | Z^{X_0}] ]\right)\\
  \nonumber
  & =\sum_{v\in V}\left( \prod_{w \in A_v} E [f_w (\mbox{Exp}(1))] \times\sum_{u\in A_v}  E [ E[ \prod_{i\ge 1}  \prod_{w \in V_{i-1} - A_v} f_w (\Gamma_0(w)); X_1 = v, X_0=u | Z^{X_0}] ]\right).
\end{align*}  
We now handle the conditional expectation above. Note that 
\begin{align*} 
&E[ \prod_{i\ge 1}  \prod_{w \in V_{i-1} - A_v} f_w (\Gamma_0(w)); X_1 = v, X_0=u | Z^{X_0}]\\ 
&= E [ \prod_{i\ge 1} \prod_{w \in A_{Z^{u}_{i-1}-A_v}} f_w(\Gamma_0(w));\Gamma_0(v)=\min_{w\in A_u}\Gamma_0(w)|X_0=u,Z^{X_0}]P(X_0=u).
\end{align*}
Observe that, given $X_0$ and $Z$, the fitnesses are independent random vectors. This implies that the above expression equals 
$$
 \mu(u) E [ \prod_{w \in A_u - A_v} f_w(U_w); U_v= \min_{w \in A_u} U_w] \times E[\prod_{i\ge 2 } \prod_{w \in A_{Z^u_{i-1} -A_v}}  f_w(\mbox{Exp}^+(A_{Z^u_{i-1}}))],
$$ 
where $(U_w:w \in V)$ are IID $\mbox{Exp}(1)$. 
But, 
 \begin{align*} 
 E [ \prod_{w \in A_u - A_v} f_w(U_w); U_v= \min_{w \in A_u} U_w] &=E [  \prod_{w \in A_u - A_v} f_w(U_w)| U_v= \min_{w \in A_u} U_w] \frac{1}{|A_u|}\\
 & = \frac{1}{|A_u|}\prod_{w \in A_u - A_v} E [ f_w (\mbox{Exp}^+(|A_u|))].
 \end{align*} 
 Putting all together we obtain 
 \begin{align*} 
 &E[ \prod_{i\ge 1}  \prod_{w \in V_{i-1} - A_v} f_w (\Gamma_0(w)); X_1 = v, X_0=u | Z^{X_0}] \\
 & \quad = \frac{ \mu(u) }{|A_u|}  \prod_{w \in A_u - A_v} E [ f_w (\mbox{Exp}^+(|A_u|))] \times \prod_{i\ge 2} \prod_{w \in A_{Z^u_{i-1} -A_v}}E [ f_w(\mbox{Exp}^+(A_{Z^u_{i-1}}))].
 \end{align*} 
Now we make use of the time reversed random walk $\bar Z$. Since the distribution of $X_0$ given $X_1=v$ is equal to 
$$ P(X_0=u  |X_1= v) = P(X_1 = v|X_0=u) P(X_0=u)/P(X_1=v)= \frac{\mu(u)}{|A_u|} \frac{1}{\mu(v)},$$ 
 summing over $u\in A_v$,  we obtain 
  \begin{align*} 
 &\sum_{u\in A_v} E[ \prod_{i\ge 1}  \prod_{w \in V_{i-1} - A_v} f_w (\Gamma_0(w)); X_1 = v, X_0=u | Z^u]  \\
 & \quad = \mu(v)  \prod_{w \in \bar{V}_1} E [ f_w (\mbox{Exp}^+(|A_{{\bar Z}_1}|))|{\bar Z}_0=v] \times \prod_{i\ge 2} \prod_{w \in A_{{\bar V}_i}}E [ f_w(\mbox{Exp}^+(|A_{\bar{Z}_i}|))|{\bar Z}_0=v]\\
 &\quad = \mu(v)\prod_{i\ge 1}  \prod_{w \in {\bar V}_i} E [ f_w(\mbox{Exp}^+(|A_{{\bar Z}_i}|))|{\bar Z}_0=v].
 \end{align*}
 Plugging this into the righthand side of Equation \eqref{eq:start_from_this}, we obtain 
 $$ E [\prod_{v \in V} f_v (\Gamma_1(v)) ] =  E\left [  \prod_{w \in {\bar V}_0} E [f_w (\mbox{Exp}(1))]  \times \prod_{i \ge 1}\prod_{w \in {\bar V}_i} E [ f_w (\mbox{Exp}^+(|A_{\bar{Z}_i}| ))]
 |{\bar Z}^{X_1}\right].$$ 
 Since the distribution of ${\bar Z}$ coincides with the distribution of $Z,$ the result follows.
\end{proof}
Consider a graph $G$ of constant degree $d\ge2$.  Then  $\mu$ is uniform. Also,  since  $|A_v|=d+1$ for all $v\in V$,  the conditional distribution of  $(\Gamma_0(v):v\not \in A_{X_0})$ on  $X_0$ and $Z$,  is IID $\mbox{Exp}^+(d+1)$. In other words,  
 \begin{prop}
 \label{pr:cycle}
 Suppose that $G$ is of constant degree $d\ge 2$. That is $|A_v|=d+1$ for all $v\in V$. Let $X_0$ be uniformly distributed on $V$, $(U_v:v\in V)$ IID $\mbox{Exp}(1)$ and $(Z_v:v \in V)$ IID $\mbox{Exp}^+(d+1)$, both independent of $X_0$ and each other. Set 
 $$\Gamma_0(v) = \begin{cases} U_v & v \in A_{X_0} \\ 
 Z_v & v \not \in A_{X_0}.\end{cases}$$ 
 Then the distribution of  $\Xi_0=(X_0,\Gamma_0)$ is stationary for the local Bak-Sneppen on $G$. 
 \end{prop}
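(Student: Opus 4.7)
The strategy is to specialize Theorem~\ref{th:main} to the constant-degree setting and show that all the $Z$-dependence in the construction collapses.

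First I would observe that since $\deg(v)=d$ for every $v\in V$, the stationary measure simplifies to $\mu(v)=\frac{d+1}{|V|(d+1)}=\frac{1}{|V|}$, so $X_0\sim\mu$ is the same as $X_0$ uniform on $V$. Next I would identify the partition block $V_0$: by definition $\tau_{X_0,v}=0$ iff $Z^{X_0}_0=X_0\in A_v$ iff $v\in A_{X_0}$, hence $V_0=A_{X_0}$ and $\bigcup_{i\ge 1}V_i=V\setminus A_{X_0}$.

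Now I apply Theorem~\ref{th:main}. Conditioned on $X_0$ and the walk $Z^{X_0}$, the theorem gives:
\begin{enumerate}
\item $(\Gamma_0(v):v\in V_0)=(\Gamma_0(v):v\in A_{X_0})$ are IID $\mbox{Exp}(1)$;
\item for each $i\ge 1$ with $V_i\neq\emptyset$, the block $(\Gamma_0(v):v\in V_i)$ is IID $\mbox{Exp}^+(|A_{Z^{X_0}_i}|)$, and these blocks are independent across $i$.
\end{enumerate}
The key simplification is that $|A_{Z^{X_0}_i}|=d+1$ for every $i$, so each block for $i\ge 1$ consists of IID $\mbox{Exp}^+(d+1)$ variables with a common parameter that does not depend on $Z^{X_0}$. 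Together with the across-block conditional independence, this means $(\Gamma_0(v):v\in V\setminus A_{X_0})$ is a vector of IID $\mbox{Exp}^+(d+1)$ random variables, conditionally on $(X_0,Z^{X_0})$ and thus also conditionally on $X_0$ alone. Likewise, $(\Gamma_0(v):v\in A_{X_0})$ remains IID $\mbox{Exp}(1)$ independent of everything else.

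Thus, given $X_0$, the vector $\Gamma_0$ has exactly the joint law described in the statement of the proposition: $\Gamma_0(v)=U_v$ for $v\in A_{X_0}$ and $\Gamma_0(v)=Z_v$ for $v\notin A_{X_0}$, with $(U_v)$ and $(Z_v)$ independent of $X_0$ and of each other. This is a direct specialization of Theorem~\ref{th:main} with no further obstacle; the only observation needed is that the random parameter $|A_{Z^{X_0}_i}|$ appearing in the theorem becomes deterministic under the constant-degree hypothesis, which removes any dependence on the auxiliary walk.
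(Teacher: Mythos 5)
Your proposal is correct and matches the paper's own treatment: the paper derives Proposition~\ref{pr:cycle} as an immediate specialization of Theorem~\ref{th:main}, noting exactly as you do that $\mu$ is uniform and that $|A_{Z^{X_0}_i}|=d+1$ makes the conditional law of $(\Gamma_0(v):v\notin A_{X_0})$ given $X_0$ and $Z$ equal to IID $\mbox{Exp}^+(d+1)$, independent of the auxiliary walk. No gap.
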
 
\begin{cor}
\label{cor:fitness}
If $G$ is of constant degree $d$, then under the stationary distribution 
\begin{enumerate} 
\item The fitness distribution at every site is the convex combination $(1 - \frac{d+1}{|V|}) \mbox{Exp}^+(d+1) + \frac{d+1} {|V|}  \mbox{Exp}(1)$. 
\item Suppose that $u,v \in V$ are such that $u\not\in A_v$ (equivalently $v\not \in A_u$).  Then the joint distribution of $(\Gamma_0(u),\Gamma_0(v))$ is given by 
\end{enumerate}
$$\frac{d+1}{|V|} \mbox{Exp}(1)\otimes \mbox{Exp}^+(d+1)+ \frac{d+1}{|V|}\mbox{Exp}^+(d+1)\otimes \mbox{Exp}(1)$$ $$+  (1 - \frac{2(d+1)}{|V|}) \mbox{Exp}^+(d+1) \otimes \mbox{Exp}^+(d+1)$$ 
 
\end{cor}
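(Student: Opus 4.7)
The plan is to derive both parts by a direct computation from Proposition~\ref{pr:cycle}, using the key observation that conditional on $X_0$ the stated stationary description makes $(\Gamma_0(w):w\in V)$ a collection of mutually independent random variables, with $\Gamma_0(w)\sim\mbox{Exp}(1)$ for $w\in A_{X_0}$ and $\Gamma_0(w)\sim\mbox{Exp}^+(d+1)$ for $w\notin A_{X_0}$. Everything reduces to bookkeeping on where $X_0$ falls.

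For part (1), I would fix $v\in V$ and split according to the event $\{v\in A_{X_0}\}=\{X_0\in A_v\}$. Since $X_0$ is uniform and $|A_v|=d+1$, this event has probability $(d+1)/|V|$. On it $\Gamma_0(v)\sim\mbox{Exp}(1)$, while on its complement $\Gamma_0(v)\sim\mbox{Exp}^+(d+1)$. The total-probability formula immediately yields the stated convex combination.

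For part (2), I would fix $u,v$ with $u\notin A_v$ and partition $V$ according to where $X_0$ falls, into the four pieces $A_u\cap A_v$, $A_u\setminus A_v$, $A_v\setminus A_u$, and $V\setminus(A_u\cup A_v)$. Conditioning on each of these events identifies the joint law of $(\Gamma_0(u),\Gamma_0(v))$ as a product measure (using the conditional independence above), whose factor types are determined by whether each of $u,v$ lies in $A_{X_0}$: both $\mbox{Exp}(1)$ on the first piece, an $\mbox{Exp}(1)\otimes\mbox{Exp}^+(d+1)$ on the second, and so on. Weighting each product by the uniform probability of $X_0$ falling in the corresponding set produces the desired mixture.

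The only subtle point is a consistency check with the stated formula. The three-term expression in the corollary corresponds precisely to the case $A_u\cap A_v=\emptyset$, equivalently $u,v$ at graph distance at least $3$, in which $|A_u\setminus A_v|=|A_v\setminus A_u|=d+1$ and $|V\setminus(A_u\cup A_v)|=|V|-2(d+1)$, matching the three coefficients. If instead $u$ and $v$ share common neighbors, an extra $\frac{|A_u\cap A_v|}{|V|}\mbox{Exp}(1)\otimes\mbox{Exp}(1)$ term must be included and the $\mbox{Exp}^+\otimes\mbox{Exp}^+$ weight adjusted to $1-(2(d+1)-|A_u\cap A_v|)/|V|$. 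So my write-up would either record this additional term for full generality or invoke the implicit hypothesis $A_u\cap A_v=\emptyset$; apart from this, the argument is a one-line unpacking of Proposition~\ref{pr:cycle}, and I do not expect any serious obstacle.
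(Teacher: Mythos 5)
Your argument is correct and is exactly the intended one: the paper gives no separate proof of this corollary, treating it as an immediate unpacking of Proposition~\ref{pr:cycle} by conditioning on whether $X_0$ lands in $A_v$ (resp.\ in which of the four cells determined by $A_u$ and $A_v$), which is precisely what you do. Your ``subtle point'' is also a genuine and correct observation: the hypothesis $u\notin A_v$ does not preclude common neighbours, and when $A_u\cap A_v\ne\emptyset$ the stated mixture must be amended by a $\frac{|A_u\cap A_v|}{|V|}\,\mbox{Exp}(1)\otimes\mbox{Exp}(1)$ term with the $\mbox{Exp}^+\otimes\mbox{Exp}^+$ weight adjusted accordingly, so part (2) as printed implicitly assumes $u$ and $v$ are at graph distance at least $3$.
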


Finally, we obtain a formula for the fitness density in a general graph.  To simplify notation, define 
$$ \sigma_v = \inf\{t\in \Z_+:X_t \in A_v\}.$$
We also write $P_u$ for the distribution of the random walk $X$, starting from $u\in V$. 
If $X_0=u$, then the distribution of $(X_{\sigma_v},\sigma_v)$ coincides with that of $(Z^v_{\tau_{u,v}},\tau_{u,v})$ defined above. Also for $v\in V$, let 
$$ \partial A_v = \{z\in A_v:A_z \cap A_v^c\ne\emptyset\}.$$ 
We have the following formula for the stationary density at vertex $v$:
\begin{prop}
Let $v\in V$. The stationary distribution for $\Gamma_0(v)$ is equal to 
$$ \frac{ \sum_{u\in A_v} |A_u|}{|S_G|} \mbox{Exp}(1)+\sum_{u\not \in A_v} \frac{|A_u|}{S_G} \sum_{z \in \partial A_v} P_u (X_{\sigma_v} =z)\mbox{Exp}^+(|A_z|).$$  
\end{prop}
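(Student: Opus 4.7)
The plan is to obtain the formula as a direct marginalization of Theorem~\ref{th:main}. Conditioning on $X_0 = u$, which under the stationary law has probability $\mu(u) = |A_u|/S_G$, I will write
\[
P(\Gamma_0(v)\in \cdot) \;=\; \sum_{u\in V}\frac{|A_u|}{S_G}\,P(\Gamma_0(v)\in \cdot \mid X_0 = u),
\]
and split the sum according to whether $u \in A_v$ or $u\not\in A_v$, so as to recover the two terms of the proposition in turn.

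For $u \in A_v$ the theorem places $v$ in $V_0$, so $\Gamma_0(v)$ is $\mbox{Exp}(1)$ and independent of the walk $Z^u$. Summing over $u \in A_v$ with weights $|A_u|/S_G$ yields the first summand $\frac{\sum_{u\in A_v}|A_u|}{S_G}\mbox{Exp}(1)$.

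For $u\not\in A_v$ we have $\tau_{u,v}\ge 1$, and by part~2(b) of Theorem~\ref{th:main} the conditional law of $\Gamma_0(v)$ given $Z^u$ is $\mbox{Exp}^+(|A_{Z^u_{\tau_{u,v}}}|)$. At this point the only nontrivial ingredient is the geometric observation that the entry vertex $Z^u_{\tau_{u,v}}$ must lie in $\partial A_v$: since $Z^u_{\tau_{u,v}-1}\not\in A_v$ while $Z^u_{\tau_{u,v}}\in A_v$, and consecutive positions of the walk are either equal or adjacent, they must be adjacent, so $Z^u_{\tau_{u,v}}$ has a neighbor outside $A_v$ and thus belongs to $\partial A_v$. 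Conditioning on the value $z\in\partial A_v$ of $Z^u_{\tau_{u,v}}$ and invoking the identification $P(Z^u_{\tau_{u,v}}=z) = P_u(X_{\sigma_v}=z)$ recorded in the excerpt then produces the second summand.

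I do not foresee any genuine obstacle. The proposition is essentially a bookkeeping corollary of Theorem~\ref{th:main}, with the only piece of content beyond routine marginalization being the $\partial A_v$ observation above.
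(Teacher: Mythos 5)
Your proposal is correct and follows essentially the same route as the paper: marginalize the stationary law of Theorem~\ref{th:main} over $X_0$, split according to $X_0\in A_v$ versus $X_0\notin A_v$, and for the latter use the conditional law $\mbox{Exp}^+(|A_{Z^u_{\tau_{u,v}}}|)$ together with the identification of $Z^u_{\tau_{u,v}}$ with $X_{\sigma_v}$ under $P_u$ and the observation that the entry point lies in $\partial A_v$. Your justification of the $\partial A_v$ step (the walk's position just before entry is outside $A_v$ and adjacent to the entry vertex) is in fact slightly more explicit than the paper's.
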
 
\begin{proof}
Suppose that $\Xi_0$ has the stationary distribution as given in Theorem \ref{th:main}. 
We clearly have 
$$  E [f (\Gamma_0(v)) ] =E [ E [ f(\Gamma_0(v) ) |X_0,Z] ].$$ 
Observe: 
$$E [ f (\Gamma_0(v));X_0\in A_v |X_0,Z]  = E [f(\mbox{Exp}(1))]{\mathbb 1}_{A_v}(X_0).$$
Since for $u\in A_v$, $\mu(u) = \frac{|A_u|}{S_G}$, this explains the first summand in the expression. 
It remains to evaluate 
\begin{equation}
\label{eq:leftover} E [f (\Gamma_0(v));X_0\not \in A_v|X_0,Z]=E [f (\Gamma_0(v))|X_0,Z]{\mathbb 1}_{A_v^c}(X_0).
\end{equation} 
Suppose then that $u\not \in A_v$, Then $v$ belongs to some $V_i,~i>1$, characterized by $i = \inf\{t\ge 0 : Z^u_i \in A_v\}$, and the (conditional) distribution of $\Gamma_0(v)$ is $\mbox{Exp}^+(|A_{Z^u_i}|)$. That is 
$$ E [ f(\Gamma_0(v))| X_0=u,Z^u] =E [ f(\mbox{Exp}^+(|A_{Z^u_{\tau_{u,v}}}|))|X_0=u,Z^u].$$ 
The distribution of $Z^u_{\tau_{u,v}}$ coincides with $P_u(X_{\sigma_v} \in \cdot)$. Since $u \in A_v^c$, and $X_{\sigma_v} \in A_v$, it follows that $X_{\sigma_v} \in \{z \in A_v:A_z \cap A_v^c\ne \emptyset\}=\partial A_v$. Taking expectation with respect to $Z^u$ gives 
$$ E[E [ f(\mbox{Exp}^+(|A_{Z^u_{\tau_{u,v}}}|))|X_0=u,Z^u]]= \sum_{z\in \partial A_v} P_u (X_{\tau_v}=z) E[f(\mbox{Exp}^+(|A_z|))].$$ 
Using this in Equation \eqref{eq:leftover} gives 
\begin{align*} E [ E [ f (\Gamma_0(v));X_0\not\in A_v |X_0,Z] ] &= \sum_{u\not \in A_v} \mu (u) E [E [ f (\Gamma_0(v)); |X_0=u,Z^u]]\\
& =\sum_{u\not \in A_v}\frac{|A_u|}{S_G}\sum_{z\in \partial A_v} P_u (X_{\tau_v}=z) E[f(\mbox{Exp}^+(|A_z|))].
\end{align*}
\end{proof} 
\subsection{The $\alpha$-Avalanches}
We now study another aspect of the model, namely its avalanches. Here, the definition of an avalanche is relaxed in the following way: given thresholds $b>0$ and $\alpha \in [0,1)$, we define an $\alpha$-avalanche from threshold $b$ as a portion of path of the process since the proportion of the vertices whose fitness is at least  $b$ exceed  $\alpha$ until, but not including, the next time this happens.  The case $\alpha=1$ coincides with the notion of an avalanche given in Section \ref{sec:bg}. The reason for introducing the $\alpha$-avalanches is because the local Bak-Sneppen is ``slow" in replacing low fitnesses as it follows a random walk on the graph, and so in typical conditions, the expected duration of avalanches ($\alpha=1$) from any threshold $b>0$ will tend to infinity as the graph becomes larger.  Note also that the  distribution of the duration of an   $\alpha$-avalanche depends on the initial configuration.  Yet, the ergodicity of the local Bak-Sneppen allows to replace the expectation  by the limit of time averages of the durations along the  sequence of consecutive  $\alpha$-avalanches, a limit which exists a.s. and is equal to a deterministic constant. 

Fix a  finite connected graph $G=(V,E)$ and consider the local Bak-Sneppen on $G$. For a threshold $b>0$ and time $t\in\Z_+$, let 
$$\Psi_t (b) = \frac{ \sum_{v \in V} {\bf 1}_{[b,\infty)} (\Gamma_t(v))}{|V|}$$ 
denote the proportion of vertices  with fitness $\ge b$ at time $t$. Let $T_0=0$, and continue inductively by letting 
$$ T_{n+1}  = \inf\{t>T_n:  \Psi_t (b)  \ge \alpha \}.$$ 
The sequence  $\{T_n\}_{n\in{\mathbb N} }$  consists of all times when the proportion of vertices with fitness greater or equal than $b$ is at least $\alpha$, and so,  represent the sequence of times where $\alpha$-avalanches begin, with the possible exception of an $\alpha$-avalanche starting at time $T_0=0$.  Observe that if $t>0$, then  $t=T_n$ for some $n$ if and only if $\Psi_t(b) \ge \alpha$. 
Let $N_t$ count the number of $\alpha$-avalanches from threshold $b$  completed by time $t$. Then for any state $\eta$, it follows from Corollary \ref{cor:erg} that 
$$\lim_{t\to\infty} \frac{N_t}{t} = \pi(A_{\alpha,b}),$$
in $P_\eta$-probability,  
where $A_{\alpha,b}$ is the set of all state $\gamma$ such that $\Psi_0(b) \ge \alpha$ when $X_0=\gamma$. That is, all states exhibiting at least a proportion $\alpha$ of the vertices with fitness $\ge b$.  Letting $t=T_n$, we have the following immediate corollary: 

\begin{cor}
\label{pr:ergodic_times}
Let $D(\alpha,b) = \frac{1}{\pi (A_{\alpha,b})}$. Then for any state $\eta$, 
$$\lim_{m\to\infty} \frac{T_m}{m} =D(\alpha,b),$$
in $P_\eta$-probability. 
\end{cor}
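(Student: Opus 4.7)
The plan is to invert the convergence $N_t/t \to \pi(A_{\alpha,b})$ in $P_\eta$-probability, which was established in the paragraph immediately preceding the statement via Corollary \ref{cor:erg} applied to the indicator of $A_{\alpha,b}$. Two simple observations get the proof started. First, the times $T_1,T_2,\dots$ are strictly increasing positive integers, hence $T_m\ge m$ and $T_m\to\infty$ deterministically. Second, the characterization of $\{T_n:n\ge 1\}$ as the set of strictly positive integers $t$ with $\Psi_t(b)\ge\alpha$ gives the identity $N_{T_m}=m$ for every $m\ge 1$.

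Next I would verify that $\pi(A_{\alpha,b})>0$, so that $D(\alpha,b)$ is finite. This is immediate from Theorem \ref{th:main}: under $\pi$, there is positive probability to draw a configuration in which every coordinate of $\Gamma_0$ exceeds $b$ (the coordinates in $A_{X_0}$ are $\mathrm{Exp}(1)$ and the remaining ones are of the form $\mathrm{Exp}^+(\,\cdot\,)$, all of which charge $(b,\infty)$), forcing proportion $1\ge\alpha$ of vertices with fitness $\ge b$. Rewriting
\[
\frac{T_m}{m}=\frac{T_m}{N_{T_m}},
\]
the continuous mapping theorem applied to $x\mapsto 1/x$, continuous near $\pi(A_{\alpha,b})>0$, upgrades $N_t/t\to\pi(A_{\alpha,b})$ in probability to $t/N_t\to D(\alpha,b)$ in $P_\eta$-probability as $t\to\infty$.

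Finally I would transfer this convergence from the deterministic time $t$ to the random time $T_m$. This is the main technical point, though a routine one. Using the subsequence criterion for convergence in probability, it suffices to show that every subsequence $m_k\to\infty$ admits a further subsequence along which $T_{m_k}/N_{T_{m_k}}\to D(\alpha,b)$ almost surely; the deterministic lower bound $T_{m_k}\ge m_k$ makes the evaluation index escape to infinity without any probabilistic detour, so extracting an a.s.\ convergent sub-subsequence of the deterministic-time process $t\mapsto t/N_t$ and evaluating it at $t=T_{m_k}$ does the job. This yields $T_m/m\to D(\alpha,b)$ in $P_\eta$-probability, as claimed, and needs no stronger ergodic input than Corollary \ref{cor:erg}.
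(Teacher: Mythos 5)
Your overall route is the one the paper intends: the paper treats the corollary as immediate from $N_t/t\to\pi(A_{\alpha,b})$ by ``letting $t=T_m$'', and your preliminary observations (monotonicity of the $T_m$, the identity $N_{T_m}=m$ up to an additive constant, and the positivity of $\pi(A_{\alpha,b})$, which the paper does not bother to record) are correct and worth making explicit. The gap is in your last step. Convergence in $P_\eta$-probability of the deterministic-time process $t\mapsto t/N_t$ does \emph{not} transfer to evaluation at random times $T_m\to\infty$, and the subsequence device you invoke does not repair this: if you extract a deterministic sub-subsequence $t_j$ of $\Z_+$ along which $t_j/N_{t_j}\to D(\alpha,b)$ almost surely, that says nothing about the value of $t/N_t$ at the random indices $T_{m_k}$, which in general will not lie in $\{t_j\}$. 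The general principle ``$Y_t\to c$ in probability and $\tau_m\to\infty$ a.s.\ imply $Y_{\tau_m}\to c$ in probability'' is false (take $Y_t$ independent Bernoulli with $P(Y_t=1)=1/t$ and $\tau_m=\inf\{t\ge m:Y_t=1\}$, which is a.s.\ finite by Borel--Cantelli; then $Y_{\tau_m}\equiv 1$). So ``evaluating it at $t=T_{m_k}$ does the job'' is a genuine unproved step.

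The fix is routine but must use the structure you actually have, namely the monotonicity of $t\mapsto N_t$ and the duality $\{T_m>t\}=\{N_t<m\}$ (up to the harmless off-by-one you already noted). Fix $0<\epsilon<D(\alpha,b)$ and set $t_m^{+}=\lfloor m(D(\alpha,b)+\epsilon)\rfloor$. Then
\[
P_\eta\bigl(T_m>t_m^{+}\bigr)=P_\eta\bigl(N_{t_m^{+}}<m\bigr)=P_\eta\Bigl(\tfrac{N_{t_m^{+}}}{t_m^{+}}<\tfrac{m}{t_m^{+}}\Bigr)\longrightarrow 0,
\]
because $m/t_m^{+}\to (D(\alpha,b)+\epsilon)^{-1}<\pi(A_{\alpha,b})$ while $N_t/t\to\pi(A_{\alpha,b})$ in $P_\eta$-probability. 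The symmetric estimate with $t_m^{-}=\lceil m(D(\alpha,b)-\epsilon)\rceil$ gives $P_\eta(T_m<t_m^{-})\to 0$, and together these yield $T_m/m\to D(\alpha,b)$ in $P_\eta$-probability. With this replacement your argument is complete and coincides with what the paper leaves implicit.
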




In what follows we assume that $d\ge 2$ and that   $(G_n=(V_n,E_n):n\in {\mathbb N})$ is a sequence of finite connected $d$-regular graphs, satisfying $V_1 \subseteq V_2 \subseteq \dots $ and $\lim_{n\to\infty} |V_n|=\infty$. Write $\pi_n$ for the stationary distribution for the local Bak-Sneppen on $G_n$. We have the following: 

\begin{prop}\label{limit_dist}
The limit    $\lim_{n\to\infty} \pi_n$ exists and is equal to the distribution  of IID $\mbox{
 Exp}^+(d+1)$-distributed random variables labeled by $V_\infty = \cup_{n} V_n$, and where  the convergence is in the weak topology on probability measures on the product space $[0,\infty)^{V_\infty}$. 
\end{prop}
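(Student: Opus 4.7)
The plan is to exploit the explicit description of $\pi_n$ given in Proposition \ref{pr:cycle}, which applies since each $G_n$ is $d$-regular with $|A_v|=d+1$ for all $v$. Under $\pi_n$, the fitnesses $(\Gamma_0(v):v\in V_n)$ are IID $\mbox{Exp}^+(d+1)$ on the random set $V_n\setminus A_{X_0}$ of size $|V_n|-(d+1)$, while on the $(d+1)$-element set $A_{X_0}$ they are IID $\mbox{Exp}(1)$. Since $X_0$ is uniform and $|A_{X_0}|$ is fixed, any fixed finite set of vertices will, with probability tending to $1$, avoid $A_{X_0}$, which is exactly what is needed for weak convergence.

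To turn this into a proof, I would verify convergence of finite-dimensional distributions, since the weak topology on $[0,\infty)^{V_\infty}$ is the product topology. Fix a finite $W\subset V_\infty$; for $n$ large enough that $W\subset V_n$, and for bounded continuous $f_v:[0,\infty)\to\mathbb{R}$ indexed by $v\in W$, I would split
\begin{equation*}
E_{\pi_n}\Bigl[\prod_{v\in W} f_v(\Gamma_0(v))\Bigr]
= E\Bigl[\prod_{v\in W} f_v(\Gamma_0(v));\,W\cap A_{X_0}=\emptyset\Bigr]
+ E\Bigl[\prod_{v\in W} f_v(\Gamma_0(v));\,W\cap A_{X_0}\neq\emptyset\Bigr].
\end{equation*}
On the event $\{W\cap A_{X_0}=\emptyset\}$, each $\Gamma_0(v)$ for $v\in W$ equals the corresponding $Z_v$, so the $(\Gamma_0(v):v\in W)$ are IID $\mbox{Exp}^+(d+1)$ and independent of $X_0$ by Proposition \ref{pr:cycle}. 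Hence the first term equals $P(W\cap A_{X_0}=\emptyset)\prod_{v\in W} E[f_v(\mbox{Exp}^+(d+1))]$.

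For the probability factor, since $X_0$ is uniform on $V_n$,
\begin{equation*}
P(W\cap A_{X_0}\neq\emptyset) = P\Bigl(X_0\in\bigcup_{v\in W} A_v\Bigr) \le \frac{|W|(d+1)}{|V_n|}\longrightarrow 0,
\end{equation*}
where I used $|A_v|=d+1$ and a union bound, together with $|V_n|\to\infty$. The second term in the decomposition is bounded in absolute value by $\bigl(\prod_{v\in W}\|f_v\|_\infty\bigr) P(W\cap A_{X_0}\neq\emptyset)$ and therefore vanishes as $n\to\infty$, while the probability factor in the first term converges to $1$. Combining these gives
\begin{equation*}
\lim_{n\to\infty} E_{\pi_n}\Bigl[\prod_{v\in W} f_v(\Gamma_0(v))\Bigr] = \prod_{v\in W} E[f_v(\mbox{Exp}^+(d+1))],
\end{equation*}
which is exactly the finite-dimensional law of the claimed IID limit.

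There is no real obstacle here: the main content is the stationary formula from Proposition \ref{pr:cycle}, which reduces the problem to the trivial observation that a uniformly chosen ball $A_{X_0}$ of fixed size $d+1$ in a graph of size tending to infinity asymptotically misses any fixed finite window $W$. The hypothesis that the $V_n$ are nested is only used to ensure that $V_\infty$ is well-defined and that every finite $W\subset V_\infty$ eventually lies inside $V_n$; no regularity of the graph sequence beyond constant degree is required.
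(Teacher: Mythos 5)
Your proof follows essentially the same route as the paper's: both reduce the problem to the explicit stationary law of Proposition \ref{pr:cycle} and observe that a uniformly chosen $X_0$ asymptotically avoids any fixed finite window, so the finite-dimensional distributions converge to products of $\mbox{Exp}^+(d+1)$ laws. Two small differences are worth noting. First, you condition on $\{W\cap A_{X_0}=\emptyset\}$, which is in fact the right event: the paper conditions on $\{X_0\notin\{v_1,\dots,v_m\}\}$, on which some $v_i$ may still lie in $A_{X_0}$ and carry an $\mbox{Exp}(1)$ fitness, so your version is the cleaner one (the paper's $(1+o(1))$ silently absorbs this discrepancy, which is of order $md/|V_n|$). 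Second, you stop at convergence of finite-dimensional distributions, whereas the paper additionally verifies tightness via the bound $\prod_{i}\bigl(1-\tfrac{d+1}{d}e^{-(N+i)}\bigr)$. Since $V_\infty$ is countable and $[0,\infty)$ is Polish, convergence of finite-dimensional distributions is indeed equivalent to weak convergence in the product topology (every bounded uniformly continuous function on the product is uniformly approximated by bounded continuous cylinder functions), so your argument has no real gap; but your justification, ``the weak topology on $[0,\infty)^{V_\infty}$ is the product topology,'' conflates the topology on the state space with the topology on measures, and you should either state the correct equivalence explicitly or include a tightness estimate as the paper does.
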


\begin{proof}
 let $v_1,v_2,\dots,v_m \in V_\infty=\cup_n V_n$ be fixed, and let $(f_i:i\le m)$ be bounded real-valued continuous functions on $[0,\infty)$.  We run the local Bak-Sneppen  on $G_n$ from its stationary distribution $\pi_n$. Then $X_0$ is uniformly distributed on $V_n$. This gives (for some constant $K>0$),  
$$ \left | E[ \prod_{i=1}^m f_i(\Gamma_0(v_i)) ] - E [ \prod_{i=1}^m f_i (\Gamma_0(v_i));  X_0 \not \in \{v_1,\dots,v_m\} ]\right |\le$$ $$ \le KP(X_0 \in \{v_1,\dots,v_m\}) = \frac{ m}{|V_n|} \to 0,$$ as $n$ goes to infinity. 
However, 
\begin{align*}  E[  \prod_{i=1}^m f_i (\Gamma_0(v_i));  X_0 \not \in \{v_1,\dots,v_m\}]  ] &=  E[  \prod_{i=1}^m f_i (\Gamma_0(v_i)) | X_0 \not \in \{v_1,\dots,v_m\} ] (1-\frac{m}{|V_n|})\\
& = \prod_{i=1}^m E [ f_i (\mbox{Exp}^+(d+1))](1+ o(1)).
\end{align*} 
This proves convergence of finite dimensional distributions.  Finally, tightness is obtained by observing that if $V_\infty = \{1,2,\dots\}$, and since $\mbox{Exp}^+(d+1)$ stochastically dominates $\mbox{Exp}(1)$, then 
$$ P( \cap_{i=1}^\infty  \{\Gamma_0(i) \in [0,N+i]\}) \ge \prod_{i=1}^\infty \left (1- P( \mbox{Exp}^+(d+1) >N+i)\right ) \ge \prod_{i=1}^\infty (1-\frac{d+1}{d}e^{-(N+i)})$$ for all $N\in\mathbb{N}$.
Although the lefthand side depends on $\pi_n$, the righthand side does not. By dominated convergence ($N=0$), the righthand side converges to $1$ as $N\to\infty$. This completes the proof. 
\end{proof} 
Now let  $$ p_{d,b} \equiv P( \mbox{Exp}^+(d+1) \ge  b) = \int_b^\infty \rho_{d+1} (t) dt = \frac{d+1}{d} (e^{-b}-\frac{1}{d+1}e^{-(d+1)b}).$$ We have the following:
\begin{prop}\label{avalanches}  Fix  $\alpha \in (0,1)$. Let $b_c\in (0,\infty)$ be the unique solution to 
$$\alpha= p_{d,b}.$$
Then 
\begin{enumerate} 
\item If $b<b_c$, then for some $\rho_1>0$,  $|D_n(\alpha,b) -1 |\le e^{-\rho_1 n}$. 
\item if $b=b_c$, then for some $c>0$, $|D_n(\alpha,b) -2|\le  \frac{c}{\sqrt{n}}$. 
\item If $b>b_c$, then for some $\rho_2>0$, $D_n (\alpha,b) \ge e^{\rho_2 n}$.
\end{enumerate} 
\end{prop}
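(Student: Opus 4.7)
The plan is to use Proposition~\ref{pr:cycle} to reduce $\pi_n(A_{\alpha,b})$ to a tail probability for a sum of independent Bernoulli random variables, and then invoke classical concentration (for Cases (1) and (3)) and the Berry--Esseen bound (for Case (2)).

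Under $\pi_n$, conditionally on $X_0$, Proposition~\ref{pr:cycle} shows that $(\Gamma_0(v):v\in V_n)$ are independent with $\Gamma_0(v)\sim\mbox{Exp}(1)$ for $v\in A_{X_0}$ and $\Gamma_0(v)\sim\mbox{Exp}^+(d+1)$ otherwise. By $d$-regularity $|A_{X_0}|=d+1$, so the distribution of
\[
S_n := |V_n|\,\Psi_0(b) = \sum_{v\in V_n}\mathbb{1}_{[b,\infty)}(\Gamma_0(v))
\]
does not depend on $X_0$: it is the sum of $d+1$ independent $\mbox{Ber}(e^{-b})$ and $|V_n|-d-1$ independent $\mbox{Ber}(p_{d,b})$ variables. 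A direct computation yields
\[
E[\Psi_0(b)] = p_{d,b} + \frac{d+1}{|V_n|}\bigl(e^{-b}-p_{d,b}\bigr),\qquad \mathrm{Var}[\Psi_0(b)] = \frac{\sigma_b^2}{|V_n|} + O(|V_n|^{-2}),
\]
with $\sigma_b^2 := p_{d,b}(1-p_{d,b})$. Differentiating the closed form for $p_{d,b}$ shows that $b\mapsto p_{d,b}$ is continuous and strictly decreasing on $[0,\infty)$ with $p_{d,0}=1$ and $\lim_{b\to\infty}p_{d,b}=0$, so $b_c$ is well-defined and $b\lessgtr b_c$ iff $p_{d,b}\gtrless\alpha$.

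For Case (1), $p_{d,b}>\alpha$, so $E[\Psi_0(b)]-\alpha\ge\delta>0$ for all large $|V_n|$. Hoeffding's inequality applied to the independent indicators $\mathbb{1}_{[b,\infty)}(\Gamma_0(v))$ then gives $P(\Psi_0(b)<\alpha)\le e^{-2\delta^2|V_n|}$, whence $\pi_n(A_{\alpha,b})\ge 1-e^{-c|V_n|}$ and $|D_n(\alpha,b)-1|\le e^{-\rho_1|V_n|}$ for large $|V_n|$. Case (3) is handled symmetrically: $p_{d,b}<\alpha$, Hoeffding gives $\pi_n(A_{\alpha,b})\le e^{-c|V_n|}$, and so $D_n(\alpha,b)\ge e^{\rho_2|V_n|}$.

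The main technical step is Case (2). Here $\alpha=p_{d,b_c}$, so
\[
y_n := \frac{\alpha-E[\Psi_0(b_c)]}{\sqrt{\mathrm{Var}[\Psi_0(b_c)]}} = O(|V_n|^{-1/2}).
\]
The Berry--Esseen theorem for sums of independent bounded (non-identically distributed) random variables applied to $S_n$ gives
\[
\sup_{x\in\mathbb{R}}\left|P\!\left(\frac{\Psi_0(b_c)-E[\Psi_0(b_c)]}{\sqrt{\mathrm{Var}[\Psi_0(b_c)]}}\le x\right)-\Phi(x)\right| \le \frac{C}{\sqrt{|V_n|}},
\]
and the Lipschitz continuity of $\Phi$ then yields $\pi_n(A_{\alpha,b_c}) = 1-\Phi(y_n) + O(|V_n|^{-1/2}) = \tfrac12 + O(|V_n|^{-1/2})$. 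Inverting produces $|D_n(\alpha,b_c)-2|\le c/\sqrt{|V_n|}$. The only subtlety here is ensuring the $O(|V_n|^{-1})$ bias of $E[\Psi_0(b_c)]$ arising from the $d+1$ ``boundary'' $\mbox{Ber}(e^{-b})$ summands does not spoil the $|V_n|^{-1/2}$ Berry--Esseen rate; this is automatic since the CLT scale $|V_n|^{-1/2}$ dominates $|V_n|^{-1}$. The bounds in the statement, written with $n$ in the exponent, follow under the natural reading $n\equiv|V_n|$.
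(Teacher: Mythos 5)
Your proposal is correct and follows essentially the same route as the paper: reduce $\pi_n(A_{\alpha,b})$ via Proposition~\ref{pr:cycle} to a tail event for a sum of independent Bernoulli indicators, then apply exponential concentration in the off-critical cases and Berry--Esseen at criticality. The only cosmetic difference is that the paper sandwiches the probability between two IID binomial tails by discarding the $d+1$ boundary vertices, whereas you track them exactly as $\mbox{Ber}(e^{-b})$ summands and use the non-identically-distributed versions of Hoeffding and Berry--Esseen; both yield the same rates.
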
 
\begin{proof}
Letting  $m = \lfloor \alpha |V_n|\rfloor$,  the event $A_{\alpha,b}$ coincides with the event that at least $m$ of the $|V_n|$ sites have fitness above $b$. Conditioning on $X_0$ we obtain
\begin{equation}\label{eq:A}
\pi_n(A_{\alpha,b})=\sum_{k=1}^{|V_n|}\pi_n(A_{\alpha,b}|X_0=v_k)\pi_n(X_0=v_k),
\end{equation} 
where $v_1,v_2,\dots,v_{|V_n|}$ is some fixed ordering of the elements of $V_n$.  

Given $X_0$, the event $A_{\alpha,b}$ contains the event that at least $m$ among the $|V_n|-(d+1)$ vertices,  not neighbors of $X_0$, have fitness above $b$. Furthermore, given $X_0$, it follows from Proposition \ref{pr:cycle} that, under the stationary distribution $\pi_n$,  the fitness of vertices outside the neighborhood of $X_0$ are IID $Exp^+(d+1)$ random variables.   
Now given $v\in V_n$, and $u \in V_n-A_v$, we have 
$$p_{d,b}=\pi_n( \Gamma_0 (u) \ge b | X_0 = v).$$ 
Hence, if  $I_1,I_2,\dots$ are IID $\mbox{Ber}(p_{d,b})$, and $S_n = I_1+\dots + I_n$, then from Equation (\ref{eq:A}) we obtain
$$\pi_n(A_{\alpha,b})\geq\sum_{k=1}^{|V_n|}P( S_{|V_n|-(d+1)} \ge m)\pi_n(X_0=v_k).$$
Since $G_n$ is $d$-regular, $\pi_n(X_0=v_k)$ is uniform on the vertices of $G_n$. Therefore
$$ \pi_n(A_{\alpha,b})\ge P( S_{|V_n|-(d+1)} \ge m). $$
On the other hand, again conditioning on $X_0$, the event $A_{\alpha,b}$ is contained in the event that at least $m-(d+1)$ vertices in $V_n-A_{X_0}$ have fitness above $b$, and so we have 
$$\pi_n(A_{\alpha,b}) \le P( S_{|V_n|-(d+1)}\ge m- (d+1)).$$ 
It follows from the Law of Large Numbers and from the Central Limit Theorem that 

$$\lim_{n\to\infty} \pi_n (A_{\alpha,b}) =\begin{cases} 1& \alpha <  p_{d,b},\\ \frac 12 & \alpha =p_{d,b} ,\\ 0 & \alpha > p_{d,b}.\end{cases}$$ 
Furthermore, from large deviations we know that the convergence in the first and last cases occurs at an exponential rate, while for the case $\alpha =p_{d,b}$, Berry-Essen theorem (see \cite{Be} for example) guarantees that $$|\pi_n (A_{\alpha,b}) - \frac 12 |= O(n^{-1/2}).$$ 
Finally, observe that $b\to p_{d,b}$ is a continuous decreasing function, with $p_{d,0}=1$ and $\lim_{b\to\infty} p_{d,b}=0$. Therefore for any given $\alpha\in (0,1)$, there exists a unique $b_c>0$, such that $p_{d,b_c} = \alpha$, $\alpha < p_{d,b}$ if $b<b_c$ and $\alpha > p_{d,b}$ if $b>b_c$. This completes the proof. 
\end{proof}

\section*{Appendix}
\begin{proof}[Proof of Corollary \ref{cor:unique_stat_dist}]
First we show that $\Xi$ has a unique stationary measure. Fix $\eta \in \Omega$. Then for  every event $A$, 
$(P_\eta (\Xi_t \in A):t\in \Z_+)$ is a Cauchy sequence since 
\begin{equation} 
\label{eq:cauchy} | P_\eta (\Xi_t \in A) - P_\eta(\Xi_{t+s}\in A) |=  E_\eta[\left | P_{\eta}(\Xi_t \in A )-  P_{\Xi_s}(\Xi_t\in A)|\right] \le \bar{d}(t)\le c \beta^t.
\end{equation}
As a result, for each $A$,  $\pi_\eta(A)= \lim_{t\to\infty }P_\eta(\Xi_t \in A)$ exists.  In order to show that $\pi_\eta$ is a probability measure, observe first that $\pi_\eta(\emptyset)=0$, $\pi_\eta (\Omega)=1$, and that $\pi_\eta$ is monotone with respect to inclusion.  Assume  $A_1,A_2,\dots$ is a sequence of disjoint events. Fix $\epsilon>0$, and let $t$ be large enough so that $c\beta^t < \epsilon $. Then 
$$|\pi_\eta (\cup_{j=1}^\infty  A_j) - P_\eta ( \Xi_t \in \cup_{j=1}^\infty A_j) |\le c \beta ^t< \epsilon.$$ 
There exists $N$ such that $P_\eta (\Xi_t \in \cup_{j> n} A_j ) <\epsilon$ whenever $n\ge N$, and it follows from Equation \eqref{eq:cauchy} that $P_\eta(\Xi_{t+s} \in \cup _{j>n} A_j ) < \epsilon + c \beta^t < 2\epsilon$ for all $n \ge N$ and $s\ge 0$. 
This gives
\begin{align*} 
 \pi_\eta (\cup_{j=1}^\infty A_j) &\le   P_\eta (\Xi_t \in \cup_{j=1}^\infty A_j) +\epsilon\\
  &  = \sum_{j=1}^n P_\eta (\Xi_t \in A_j) + P_{\eta} (\Xi_t \in \cup_{j>n} A_j) + \epsilon\\
  &  \le \sum_{j=1}^n P_\eta (\Xi_t \in A_j) + 3\epsilon\\
  &\underset{t\to\infty}{\to} \sum_{j=1}^n \pi_\eta (A_j)+3\epsilon\\
  &\le \sum_{j=1}^\infty \pi_\eta (A_j)+3\epsilon.
  \end{align*}
  On the other hand,  it immediately follows from Fatou's lemma that 
  $$\pi_\eta (\cup_{j=1}^\infty A_j) \ge \sum_{j=1}^\infty \pi_\eta (A_j),$$ 
  implying that  $\pi_\eta$ is indeed a probability measure. 
  
  Next we prove that $\pi_\eta$ is a stationary measure for $\Xi$. From its definition and bounded convergence,  $P_{\pi_\eta}(\Xi_s \in A )= \lim_{t\to\infty} E_{\eta} P_{\Xi_t}(\Xi_s \in A)=\lim_{t\to\infty}P_\eta (\Xi_{t+s}\in A)=\pi_\eta(A)$.   
  
  The final step is to show uniqueness.  Observe that, for  $\eta,\eta' \in \Omega$, 
   \begin{align*} 
    |\pi_\eta (A) -\pi_{\eta'} (A) | &\le |P_\eta (\Xi_t \in A) - \pi_\eta (A)| + | P_\eta (\Xi_t \in A) - P_{\eta'} (\Xi_t \in A) |\\
    & + |P_{\eta'} (\Xi_t \in A) -\pi_{\eta'}(A)|.
    \end{align*}
  Hence $|\pi_\eta (A) -\pi_{\eta'} (A) | \to  0$ when $t$ goes to infinity and  $\pi_\eta =\pi_{\eta'}$. Now if $\pi$ is a stationary measure for $\Xi$,  then for $s,t\ge 0$, we have 
 
$$ \pi(A) = P_\pi (A) = P_{\pi} [P_{\Xi_s}  (\Xi_t \in A)] \underset{t\to\infty} {\to} P_\pi [\pi_{\Xi_s} (A)],$$ 
and the result follows because $\pi_{\Xi_s}$ is independent of $\Xi_s$.
\end{proof} 
\begin{proof} [Proof of Corollary \ref{cor:erg}]
Without loss of generality, assume $\int f d\pi =0$. Let $S_n = \sum_{t=0}^{n-1}f (\Xi_t)$.  We need to prove $S_n/n\to 0$ in $P_\eta$-probability for every $\eta$. This will follow if we show   $E_\eta [S_n^2/n^2]\to 0$ as $n\to\infty$. Now  
  $$ E_{\eta} [S_n^2] = \sum_{t=0}^{n-1}E_{\eta} [f^2(\Xi_t)]+2\sum_{t=0}^{n-1}\sum_{s=1}^{n-1-t} E_{\eta }[f(\Xi_t) f(\Xi_{t+s})].$$ 
   Observe that $E_{\eta }[f(\Xi_t) f(\Xi_{t+s})]= E_{\eta}[f(\Xi_t)E_{\Xi_t}[f(\Xi_s)]]$.  By Proposition \ref{experg} and Corollary \ref{cor:d_bounds}
  	$$| E_{\eta}[f(\Xi_t)E_{\Xi_t}[f(\Xi_s)]]|\le c E_{\eta }[|f(\Xi_t)|]\beta^s=c\|f\|_\infty \beta^s.$$ 
  	Therefore,  
  	$$\sum_{t=0}^{n-1}\sum_{s=1}^{n-1-t} E_{\eta }[f(\Xi_t) f(\Xi_{t+s})]\le \sum_{t=0}^{n-1}  \sum_{s=1}^{ n-1-t} c \|f\|_\infty\beta^s \le \frac{cn}{1-\beta}\|f\|_\infty.$$
	Hence $E [ S_n^2 /n^2]=O(n^{-1})$, and the result follows. 
 \end{proof}
\section*{Acknowledgements}  Iddo Ben-Ari was partially supported by Simons Foundation, grant number 208728. Roger W. C.
Silva was partially supported by Funda\c{c}\~{a}o de Amparo \`{a} Pesquisa do Estado de Minas Gerais (FAPEMIG), grant number APQ-02743-14. The second author also thanks UCONN-University of Connecticut  where an early version of this paper was finished during his visit.

\end{document}